\DeclareMathOperator{\lcm}{lcm}
\newtheorem{Theorem}{Theorem}[section]
\newtheorem{Definition}[Theorem]{Definition}
\newtheorem{Corollary}[Theorem]{Corollary}
\newtheorem{Lemma}[Theorem]{Lemma}
\newtheorem{Example}{Example}[section]
\newtheorem{Observation}[Theorem]{Observation}
\theoremstyle{remark}
\definecolor{Blue}{rgb}{0,0,1}
\definecolor{DarkGreen}{rgb}{0,0.6,0}
\definecolor{Red}{rgb}{1,0,0}
\definecolor{Orange}{rgb}{1,0.5,0}
\long\def\delete#1{}
\newcommand{\be}{\begin{equation}}
\newcommand{\ee}{\end{equation}}
\newcommand{\bea}{\begin{eqnarray}}
\newcommand{\eea}{\end{eqnarray}}
\newcommand{\bean}{\begin{eqnarray*}}
\newcommand{\eean}{\end{eqnarray*}}
\def\ra{\rangle}
\def\diam{{\rm diam}}
\def\span{{\rm span}}
\def\rn{{\rm rn}}
\def\ve{\varepsilon}
\def\({\left(}
\def\){\right)}
\def\[{\left[}
\def\]{\right]}
\def\ra{\rightarrow}
\begin{document}

\title{Radio number for the Cartesian product of a tree and a complete graph}
\author[a]{Payal Vasoya \thanks{E-mail:\texttt{prvasoya92@gmail.com} (Payal Vasoya)}}

\affil[a]{Gujarat Technogical University, Ahmedabad - 382 424, Gujarat, India}

\author[b]{Devsi Bantva\thanks{E-mail:\texttt{devsi.bantva@gmail.com} (Devsi Bantva)}\thanks{Corresponding author.}}

\affil[b]{Lukhdhirji Engineering College, Morvi 363 642, Gujarat, India}

\pagestyle{myheadings}
\markboth{\centerline{Payal Vasoya and Devsi Bantva}}{\centerline{Radio number for the Cartesian product of a tree and a complete graph}}

\date{}
\openup 0.8\jot
\maketitle

\begin{abstract}
A radio labelling of a graph $G$ is a mapping $f : V(G) \rightarrow \{0, 1, 2,\ldots\}$ such that $|f(u)-f(v)|\geq \diam(G) + 1 - d(u,v)$ for every pair of distinct vertices $u,v$ of $G$, where $\diam(G)$ is the diameter of $G$ and $d(u,v)$ is the distance between $u$ and $v$ in $G$. The radio number $\rn(G)$ of $G$ is the smallest integer $k$ such that $G$ admits a radio labelling $f$ with $\max\{f(v):v \in V(G)\} = k$. In this paper, we give a lower bound for the radio number of the Cartesian product of a tree and a complete graph and give two necessary and sufficient conditions to achieve the lower bound. We also give three sufficient conditions to achieve the lower bound. We determine the radio number for the Cartesian product of a level-wise regular trees and a complete graph which attains the lower bound. The radio number for the Cartesian product of a path and a complete graph derived in \cite{Kim} can be obtained using our results in a short way.

\smallskip
\emph{Keywords}: Radio labelling, radio number, Cartesian product, tree, Complete graph.

\smallskip
\emph{AMS Subject Classification (2010)}: 05C78, 05C15, 05C12.
\end{abstract}

\section{Introduction}\label{intro}
The channel assignment problem is the problem of assigning a channel to each transmitter in a radio network such that a set of constraints is satisfied and the span is minimized. The constraints for assigning channels to transmitters are usually determined by the geographic location of the transmitters; closer the location, the stronger the interference might occur. In order to avoid stronger interference, the larger frequency gap between two assigned frequencies must be required. In \cite{Hale}, Hale designed the optimal labelling problem for graphs to deal with this channel assignment problem. In this model, the transmitters are represented by the vertices of a graph, and two vertices are adjacent if the corresponding transmitters are close to each other. Initially, only two levels of interference, namely \emph{avoidable} and \emph{unavoidable}, were considered which inspired Griggs and Yeh \cite{Griggs} to introduce the following concept: An \emph{$L(2,1)$-labelling} of a graph $G$ is a function $f : V(G) \rightarrow \{0, 1, 2, \ldots\}$ such that $|f(u)-f(v)| \geq 2$ if $d(u,v) = 1$ and $|f(u)-f(v)| \geq 1$ if $d(u,v) = 2$. The span of $f$ is defined as $\max\{|f(u)-f(v)|: u, v \in V(G)\}$, and the \emph{$\lambda$-number} (or the $\lambda_{2,1}$-number) of $G$ is the minimum span of an $L(2,1)$-labelling of $G$. The $L(2,1)$-labelling problem has been studied extensively in the past more than two decades, as one can find in the survey articles \cite{Calamoneri,Yeh1}.

Denote the diameter of a graph $G$ by $\diam(G)$ (the diameter of a graph $G$ is $\diam(G) = \max\{d(u,v) : u, v \in V(G)\}$). In \cite{Chartrand1,Chartrand2}, Chartrand \emph{et al.} introduced the concept of the radio labelling problem by extending the condition on distance in $L(2,1)$-labelling from two to the maximum possible distance in a graph - the diameter of a graph.

\begin{Definition}
{\em
A \emph{radio labelling} of a graph $G$ is a mapping $f: V(G) \rightarrow \{0, 1, 2, \ldots\}$ such that the following hold for every pair of distinct vertices $u, v$ of $G$,
\begin{equation}\label{rn:def}
|f(u)-f(v)| \geq \diam(G) + 1 - d(u,v).
\end{equation}
The integer $f(u)$ is called the \emph{label} of $u$ under $f$, and the \emph{span} of $f$ is defined as $\span(f) = \max\{|f(u)-f(v)|: u, v \in V(G)\}$. The \emph{radio number} of $G$ is defined as
$$
\rn(G) = \min_{f} \span(f)
$$
with minimum taken over all radio labellings $f$ of $G$. A radio labelling $f$ of $G$ is called \emph{optimal} if $\span(f) = \rn(G)$.
}
\end{Definition}

Observe that a radio labelling problem is a min-max type optimization problem. Without loss of generality we may always assume that any radio labelling assigns $0$ to some vertex then the span of a radio labelling is equal to the maximum label used. Since $d(u,v) \leq \diam(G)$, any radio labelling always assigns different labels to distinct vertices. Therefore, a radio labelling $f$ of graph $G$ with order $m$ induces the linear order
\be
\label{eqn:ord}
\vec{V}_f : a_{0}, a_{1}, \ldots, a_{m-1}
\ee
of the vertices of $G$ such that
\be
\label{eq:spf}
0 = f(a_{0}) < f(a_{1}) < \ldots < f(a_{m-1}) = \span(f).
\ee
A linear order $a_0,a_1,\ldots,a_{m-1}$ of $V(T)$ is called optimal linear order if it induced by some optimal radio labelling $f$ of $T$.

The determining radio number of graphs is a tough and challenging task. The radio number is known only for very few graphs and the much attention has been paid to special families of graphs. It turns out that even for some special  graph families the problem may be difficult. For example, the radio number of paths was determined by Liu and Zhu in \cite{Liu}, and even for this basic graph family the problem is nontrivial. In \cite{Daphne2,Daphne3}, Liu and Xie determine the radio number for the square graph of paths and cycles. In \cite{Benson}, Benson \emph{et al.} determined the radio number of all graphs of order $n \ge 2$ and diameter $n-2$. In \cite{Daphne1}, Liu gave a lower bound for the radio number of trees and a necessary and sufficient condition for this bound to be achieved. The author also presented a special class of trees, namely spiders, achieving this lower bound. In \cite{Li}, Li \emph{et al.} determined the radio number of complete $m$-ary trees of height $k$, for $k \geq 1, m \geq 2$. In \cite{Tuza}, Hal\'{a}sz and Tuza gave a lower bound for the radio number of level-wise regular trees and proved further that this bound is tight when all the internal vertices have degree more than three. In \cite{Bantva2}, Bantva \emph{et al.} gave a necessary and sufficient condition for the lower bound given in \cite{Daphne1} to be tight along with two sufficient conditions for achieving this lower bound. Using these results, they also determined the radio number of three families of trees in \cite{Bantva2}. In \cite{Bantva3}, Bantva determined the radio number of some trees obtained by applying a graph operation on given trees. In \cite{Saha}, Liu \emph{et al.} improved the lower bound for the radio number of trees. Recently, in \cite{Bantva1}, Bantva and Liu gave a lower bound for the radio number of block graphs and, three necessary and sufficient conditions to achieve the lower bound. The authors gave three other sufficient conditions to achieve the lower bound and also discuss the radio number of line graphs of trees and block graphs in \cite{Bantva1}. Using these results, the authors determine the radio number of extended star of blocks and level-wise regular block graphs in \cite{Bantva1}.

In this paper, we give a lower bound for the radio number of the Cartesian product of a path and a complete graph (Theorem \ref{thm:lower}). We also give two necessary and sufficient conditions (Theorems \ref{thm:ns1} and \ref{thm:ns2}) and three other sufficient conditions (Theorem \ref{thm:suf}) to achieve the lower bound. Using these results, we determine the radio number of the Cartesian product of a level-wise regular tree and a complete graph. The radio number for the Cartesian product of a path and a complete graph given in \cite{Kim} can be obtained using our results in a short way.

\section{Preliminaries}
\label{prel}

We follow \cite{West} for standard graph-theoretic terms and notations. In a graph $G$, the neighbourhood of any $v \in V(G)$ is $N_G(v) = \{u : u \mbox{ is adjacent to }v\}$. The distance between two vertices $u$ and $v$ in $G$, denoted by $d_G(u,v)$, is the length of the shortest path joining $u$ and $v$ in $G$. The diameter of a graph $G$, denoted by $\diam(G)$, is $\diam(G) = \max\{d_G(u,v) : u, v \in V(G)\}$. We drop the suffix in above defined terms when $G$ is clear in the context. A complete graph $K_n$ is a graph on $n$ vertices in which every two vertices are adjacent. A \emph{tree} is a connected acyclic graph. We fix $V(T) = \{u_0,u_1,\ldots,u_{m-1}\}$ for tree $T$ of order $m$ and $V(K_n) = \{v_0,v_1,\ldots,v_{n-1}\}$ throughout the paper. A vertex $v \in V(T)$ is an \emph{internal vertex} of $T$ if it has degree greater than one and is a \emph{leaf} otherwise. In \cite{Bantva2,Daphne1}, the \emph{weight} of $T$ from $v \in V(T)$ is defined as
$$
w_{T}(v) = \sum_{u \in V(T)}d(u,v)
$$
and the weight of $T$ is defined as
$$
w(T) = \min\{w_{T}(v) : v \in V(T)\}.
$$
A vertex $v \in V(T)$ is a \emph{weight center} \cite{Bantva2,Daphne1} of $T$ if $w_{T}(v) = w(T)$. Denote by $W(T)$ the set of weight centers of $T$. In \cite{Daphne1}, the following is proved about $W(T)$.
\begin{Lemma}\cite{Daphne1}\label{lem:wt1} If $r$ is a weight center of a tree $T$. Then each component of $T-r$ contains at most $|V(T)|/2$ vertices.
\end{Lemma}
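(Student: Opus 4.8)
The plan is to argue by contradiction, exploiting the minimality in the definition of a weight center; this is essentially the classical centroid argument for trees. Suppose $r \in W(T)$ but some component $C$ of $T - r$ satisfies $|V(C)| > |V(T)|/2$. Let $r'$ be the unique vertex of $C$ adjacent to $r$ in $T$, and compare the weights $w_T(r)$ and $w_T(r')$.

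The heart of the matter is a short distance bookkeeping. Since $T$ is a tree, deleting the edge $rr'$ separates $V(C)$ from $V(T) \setminus V(C)$; hence for every $u \in V(C)$ the path from $u$ to $r$ passes through $r'$, so $d(u,r') = d(u,r) - 1$, whereas for every $u \in V(T) \setminus V(C)$ (note that $r$ itself belongs to this set) the path from $u$ to $r'$ passes through $r$, so $d(u,r') = d(u,r) + 1$. Summing over all $u \in V(T)$ gives
$$
w_T(r') - w_T(r) = -|V(C)| + \left(|V(T)| - |V(C)|\right) = |V(T)| - 2|V(C)|.
$$
By assumption the right-hand side is strictly negative, so $w_T(r') < w_T(r) = w(T)$, contradicting the fact that $w(T)$ is the minimum of $w_T(\cdot)$ over $V(T)$. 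Hence no component of $T - r$ can contain more than $|V(T)|/2$ vertices.

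There is no genuine obstacle in this argument; the only point deserving care is the case split in the distance computation, and in particular remembering that $r$ itself lies outside $C$ and therefore contributes $+1$ to the difference. The same reasoning can be summarised by the ``moving toward the heavier side strictly decreases the total distance'' principle, which is exactly what forces a weight center to balance the tree.
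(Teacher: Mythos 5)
Your proof is correct and complete: the uniqueness of $r'$ and the two distance identities are exactly right, and the computation $w_T(r') - w_T(r) = |V(T)| - 2|V(C)|$ yields the desired contradiction. The paper does not reproduce a proof (it cites the lemma from Liu, \emph{Radio number for trees}), and the argument given there is essentially this same classical centroid/weight-shifting argument, so nothing further is needed.
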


\begin{Lemma}\cite{Daphne1}\label{lem:wt2} Every tree $T$ has one or two weight centers, and $T$ has two weight centers, say, $W(T)=\{r,r'\}$ if and only if $rr'$ is an edge of $T$ and $T-rr'$ consists of two equal sized components.
\end{Lemma}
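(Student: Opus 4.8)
The plan is to exploit a single exact formula describing how the weight $w_T$ changes along an edge, and to deduce from it that $w_T$ is ``convex'' along every path of $T$; both halves of the lemma then drop out.

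\textbf{The edge-traversal formula.} For an edge $e=vv'$ of $T$, let $T^e_v$ and $T^e_{v'}$ be the two components of $T-e$ containing $v$ and $v'$ respectively, and write $n=|V(T)|$. For $u\in T^e_v$ the $v$--$v'$ path is an initial segment of the $u$--$v'$ path, so $d(u,v')=d(u,v)+1$; symmetrically $d(u,v)=d(u,v')+1$ for $u\in T^e_{v'}$. Summing over all $u$,
\[
w_T(v)-w_T(v') = |V(T^e_{v'})|-|V(T^e_v)|,
\qquad\text{equivalently}\qquad
w_T(v') = w_T(v)+|V(T^e_v)|-|V(T^e_{v'})|.
\]
So moving from $v$ to a neighbour $v'$ strictly lowers the weight exactly when one moves into the larger of the two components of $T-e$, and keeps it fixed exactly when those two components both have $n/2$ vertices.

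\textbf{Convexity along a path.} I would iterate this along an arbitrary path $x_0x_1\cdots x_k$ of $T$. Put $e_i=x_ix_{i+1}$ and let $a_i=|V(T^{e_i}_{x_i})|$ be the size of the component of $T-e_i$ on the $x_0$ side. One checks the strict nesting $T^{e_0}_{x_0}\subsetneq T^{e_1}_{x_1}\subsetneq\cdots\subsetneq T^{e_{k-1}}_{x_{k-1}}$ (a vertex whose path to $x_i$ avoids $e_i$ has its path to $x_{i+1}$ avoiding $e_{i+1}$), hence $a_0<a_1<\cdots<a_{k-1}$. By the displayed formula $w_T(x_{i+1})-w_T(x_i)=a_i-(n-a_i)=2a_i-n$, so the consecutive differences of $w_T(x_0),\dots,w_T(x_k)$ are strictly increasing: $w_T$ is strictly convex along the path, i.e.\ it strictly decreases, is level for at most one step, then strictly increases. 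Two consequences follow. First, the minimum of $w_T$ on the path is attained at one vertex, or at two consecutive vertices. Second, if both endpoints $x_0,x_k$ are global minimizers of $w_T$, then convexity places every intermediate value on or below the constant chord, forcing $w_T(x_i)=w(T)$ for all $i$; and the strictly increasing first differences $2a_i-n$ sum to $w_T(x_k)-w_T(x_0)=0$, which forces $k=1$ (otherwise $2a_0-n<0$ and $w_T(x_1)<w(T)$, impossible), whereupon $2a_0=n$.

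\textbf{Conclusion.} Since $W(T)$ is a nonempty finite set it is nonempty; by the second consequence any two weight centers are adjacent, and three of them would be pairwise adjacent, which a tree cannot contain, so $|W(T)|\in\{1,2\}$. If $|W(T)|=2$, the same consequence shows the two weight centers $r,r'$ are joined by an edge and that $T-rr'$ consists of two components with $n/2$ vertices each. Conversely, suppose $rr'$ is an edge and $T-rr'$ splits into $T_r\ni r$ and $T_{r'}\ni r'$ with $|V(T_r)|=|V(T_{r'})|=n/2$. The edge formula gives $w_T(r)=w_T(r')$ at once. For any other vertex $x$, say $x\in T_r$, walk from $x$ to $r$ along the path of $T$; at each step the component left behind is contained in $T_r$ and misses $r$, hence has fewer than $n/2$ vertices, so by the formula $w_T$ strictly decreases at that step, and chaining gives $w_T(x)>w_T(r)$; the case $x\in T_{r'}$ is symmetric. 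Hence $r$ and $r'$ are exactly the minimizers of $w_T$, i.e.\ $W(T)=\{r,r'\}$. The only mildly delicate points are verifying the strict nesting $T^{e_i}_{x_i}\subsetneq T^{e_{i+1}}_{x_{i+1}}$ and handling the possible level step in the convexity argument; everything else is bookkeeping with the edge formula.
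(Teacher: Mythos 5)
Your proof is correct. Note that the paper itself gives no proof of this lemma --- it is quoted from Liu's \emph{Radio number for trees} \cite{Daphne1} --- and your argument via the edge-traversal identity $w_T(v')-w_T(v)=|V(T^e_v)|-|V(T^e_{v'})|$ and convexity of $w_T$ along paths is essentially the standard argument used there, so there is nothing to flag beyond the routine verifications you already identify (the strict nesting of the components and the handling of the single level step).
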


A vertex $u$ is called \cite{Bantva2,Daphne1} an \emph{ancestor} of a vertex $v$, or $v$ is a \emph{descendent} of $u$, if $u$ is on the unique path joining a weight center and $v$. Let $u \in V(T) \setminus W(T)$ be a vertex adjacent to a weight center $x$. The subtree of $T$ induced by $u$ and all its descendants is called the \emph{branch} of $T$ at $u$. Two vertices $u, v$ of $T$ are said to be in \emph{different branches} if the path between them consists only one weight center and in \emph{opposite branches} if the path joining them consists two weight centers. We view a tree $T$ rooted at its weight center $W(T)$: if $W(T) = \{r\}$, then $T$ is rooted at $r$; if $W(T) = \{r,r'\}$, then $T$ is rooted at $r$ and $r'$ in the sense that both $r$ and $r'$ are at level 0. In either case, for each $u \in V(T)$, define
$$
L(u) = \mbox{min}\{d(u,x) : x \in W(T)\}
$$
to indicate the \emph{level} of $u$ in $T$, and define
$$
L(T) = \sum_{u \in V(T)} L(u)
$$
the \emph{total level} of $T$. For any $u, v \in V(T)$, define
\begin{equation*}
\label{eq:phi}
\phi(u,v) = \mbox{max}\{L(x) : x \mbox{ is a common ancestor of $u$ and $v$}\},
\end{equation*}
\begin{equation*}
\label{eq:delta}
\delta(u,v) =
\begin{cases}
1, & \mbox{if $|W(T)| = 2$ and the $(u, v)$-path in $T$ contains both weight centers}, \\
0, & \mbox{otherwise}.
\end{cases}
\end{equation*}

\begin{Lemma}[{\cite[Lemma 2.1]{Bantva2}}]\label{lem4} Let $T$ be a tree with diameter $d \geq 2$. Then for any $u, v \in V(T)$ the following hold:
\begin{enumerate}[\rm (a)]
  \item $\phi(u,v) \geq 0$;
  \item $\phi(u,v) = 0$ if and only if $u$ and $v$ are in different or opposite branches;
  \item $\delta(u,v) = 1$ if and only if $T$ has two weight centers and $u$ and $v$ are in opposite branches;
  \item the distance $d(u,v)$ in $T$ between $u$ and $v$ can be expressed as
  \begin{equation}\label{eqn:dist}
  d(u,v) = L(u) + L(v) - 2\phi(u,v)+\delta(u,v).
  \end{equation}
\end{enumerate}
\end{Lemma}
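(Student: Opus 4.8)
The plan is to establish (a)--(c) directly from the definitions of ancestor, level and $\delta$, and then to obtain the distance formula (d) by a case analysis in which (b) and (c) are used to pin down the values of $\phi(u,v)$ and $\delta(u,v)$. For (a), note that every weight center $w$ is an ancestor of every vertex of $T$, since $w$ trivially lies on the path joining $w$ to any vertex; in particular $w$ is a common ancestor of $u$ and $v$, and as $L(w)=0$ this gives $\phi(u,v)\ge 0$.

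For (b) I would argue the contrapositive in both directions. If $u$ and $v$ lie in a common branch at a vertex $z$ adjacent to a weight center, then $z$ is a common ancestor of $u$ and $v$ with $L(z)=1$, so $\phi(u,v)\ge 1>0$. Conversely, if some common ancestor $x$ of $u$ and $v$ has $L(x)\ge 1$, then $x$ is not a weight center, hence $x$ lies in the branch at some neighbour $z$ of a weight center; unwinding the definition of ancestor (the weight-center-to-$u$ path through $x$ must continue down to $u$, and likewise for $v$), both $u$ and $v$ turn out to be descendants of $z$, i.e. they lie in the same branch, contradicting the assumption that they are in different or opposite branches. Part (c) is then immediate from the definition of $\delta$ together with Lemma \ref{lem:wt2}: $\delta(u,v)=1$ means $|W(T)|=2$ and the $(u,v)$-path contains both weight centers, which, since $T$ has at most two weight centers, is exactly the condition that $u$ and $v$ lie in opposite branches.

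For (d) I split into the three mutually exclusive and exhaustive cases according to the number of weight centers on the $(u,v)$-path (zero, one, or two, using Lemma \ref{lem:wt2}). If the path contains none, then $u$ and $v$ lie in a common branch at a neighbour $z$ of a weight center $w$; on this branch $L(\cdot)$ coincides with distance from $w$, the common ancestors of $u$ and $v$ form a chain topped by their lowest common ancestor $a$, so $\phi(u,v)=L(a)$, and the standard rooted-tree identity $d(u,v)=d(u,a)+d(a,v)=(L(u)-L(a))+(L(v)-L(a))$ yields the formula, since here $\delta(u,v)=0$. If the path contains exactly one weight center $w$, then $d(u,v)=d(u,w)+d(w,v)=L(u)+L(v)$, while $\phi(u,v)=0$ by (b) and $\delta(u,v)=0$, matching. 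If the path contains two weight centers $r,r'$, say $u$ on the $r$-side and $v$ on the $r'$-side, then $d(u,v)=d(u,r)+1+d(r',v)=L(u)+L(v)+1$, with $\phi(u,v)=0$ by (b) and $\delta(u,v)=1$ by (c); again the formula holds. The degenerate cases in which $u$ or $v$ is itself a weight center are absorbed into the one- or two-weight-center cases, the corresponding level being $0$.

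The routine-but-fiddly part will be the bookkeeping when $|W(T)|=2$: one must check that for a vertex inside a branch the nearest weight center is the one the branch hangs from (so that $L$ really is in-branch depth), and determine exactly which vertices count as ancestors of $u$ when the path from one weight center to $u$ passes through the other. Beyond that I expect no real obstacle; (d) is ultimately the familiar "distance via lowest common ancestor" computation, performed uniformly across the three cases.
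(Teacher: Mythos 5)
The paper does not prove this lemma at all: it is imported verbatim as \cite[Lemma 2.1]{Bantva2}, so there is no in-paper argument to compare yours against. Your proof is correct and is the standard one for this statement --- parts (a)--(c) straight from the definitions of ancestor, level and $\delta$, and part (d) as the lowest-common-ancestor distance computation split according to whether the $(u,v)$-path meets zero, one, or two weight centers --- with the only delicate point (that inside a branch $L$ equals depth below the weight center the branch hangs from, which needs checking when $|W(T)|=2$) correctly identified and disposed of.
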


Define
\begin{equation*}
\ve(T) =
\begin{cases}
1, & \mbox{if $|W(T)| = 1$}, \\
0, & \mbox{if $|W(T)| = 2$}.
\end{cases}
\end{equation*}

Let $G=(V(G),E(G))$ and $H=(V(H),E(H))$ be two graphs. The Cartesian product of $G$ and $H$ is the graph $G \Box H$ with $V(G \Box H) = V(G) \times V(H)$ and two vertices $(a,b)$ and $(c,d)$ are adjacent if $a=c$ and $(b,d) \in E(H)$ or $b=d$ and $(a,c) \in E(G)$. It is clear from the definition that $d_{G \Box H} ((x_1, y_1), (x_2, y_2)) = d_{G} (x_1, x_2) + d_{H} (y_1, y_2)$.

\begin{Observation}\label{Obs1} For a tree $T$ of order $m\,(m \geq 2)$ and a complete graph $K_n$,
\begin{enumerate}[\rm (a)]
\item $|V(T \Box K_n)| = mn$,
\item $\diam(T \Box K_n) = \diam(T)+1$.
\end{enumerate}
\end{Observation}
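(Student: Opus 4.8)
The plan is to derive both parts directly from the definition of the Cartesian product together with the distance formula recorded immediately before the statement, namely $d_{G \Box H}((x_1,y_1),(x_2,y_2)) = d_G(x_1,x_2) + d_H(y_1,y_2)$. No auxiliary machinery is needed.

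For part (a), I would simply note that $V(T \Box K_n) = V(T) \times V(K_n)$ by definition, hence $|V(T \Box K_n)| = |V(T)| \cdot |V(K_n)| = mn$. For part (b), the two ingredients are the displayed distance identity and the elementary fact that $\diam(K_n) = 1$ (any two distinct vertices of $K_n$ are adjacent; here we use $n \geq 2$, which is implicit since otherwise $T \Box K_1 \cong T$). Taking the maximum of $d_T(x_1,x_2) + d_{K_n}(y_1,y_2)$ over all pairs of vertices of $T \Box K_n$, one observes that the two coordinates are optimized independently: the upper bound $d_{T \Box K_n} \leq \diam(T) + 1$ is immediate from $d_T \leq \diam(T)$ and $d_{K_n} \leq 1$, while choosing $x_1, x_2 \in V(T)$ with $d_T(x_1,x_2) = \diam(T)$ and any $y_1 \neq y_2$ in $V(K_n)$ gives a genuine pair of distinct vertices of $T \Box K_n$ (distinct because $\diam(T) \geq 1$ for $m \geq 2$) realizing distance $\diam(T) + 1$. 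Combining the two inequalities yields $\diam(T \Box K_n) = \diam(T) + 1$.

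I do not anticipate any real obstacle: the statement is a textbook-level consequence of the product formulas for order and for the graph metric. The only point worth making explicit in the write-up is the standing assumption $n \geq 2$, which is what makes $\diam(K_n) = 1$ and hence keeps part (b) valid.
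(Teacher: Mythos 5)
Your proposal is correct and follows exactly the route the paper intends: the paper states this as an unproved Observation immediately after recording the product distance formula $d_{G \Box H}((x_1,y_1),(x_2,y_2)) = d_G(x_1,x_2)+d_H(y_1,y_2)$, from which both parts follow just as you argue. Your explicit remark that $n \geq 2$ is needed for $\diam(K_n)=1$ is a sensible clarification of a hypothesis the paper leaves implicit.
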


Let $\vec{V} = (w_0,w_1,\ldots,w_{mn-1})$ be an ordering of $V(T \Box K_n)$, where $|T| = m$. Then note that each $w_t\; (0 \leq t \leq mn-1)$ is an ordered pair $(u_{i_t},v_{j_t})$ with $u_{i_t} \in V(T)$ and $v_{j_t} \in V(K_n)$. Hence each vertex $u_i,\;i=0,1,\ldots,m-1$ appear $n$ times and each vertex $v_j,\;j=0,1,\ldots,n-1$ appear $m$ times in $T \Box K_n$. For $w_a = (u_{i_a},v_{j_a}), w_b = (u_{i_b},v_{j_b}) \in V(T \Box K_n)\;(0 \leq a,b \leq mn-1)$, $v_{j_a}$ and $v_{j_b}$ are called \emph{distinct} if $j_a \neq j_b$. For any $w_a = (u_{i_a},v_{j_a}), w_b = (u_{i_b},v_{j_b}) \in V(T \Box K_n)\;(0 \leq a,b \leq mn-1)$, the distance between $w_a$ and $w_b$ is given by
\begin{equation}\label{dist:TKn}
d(w_a,w_b) = L(u_{i_a})+L(u_{i_b})+\delta(u_{i_a},u_{i_b})-2\phi(u_{i_a},u_{i_b})+d(v_{j_a},v_{j_b})
\end{equation}

\section{A tight lower bound for $\rn(T \Box K_n)$}\label{main:sec}
In this section, we continue to use terms and notations defined in the previous section. We first give a lower bound for the radio number of the Cartesian product of a tree and a complete graph. Next we give two necessary and sufficient conditions and three other sufficient conditions to achieve the lower bound.

\begin{Theorem}\label{thm:lower} Let $T$ be a tree of order $m$ and diameter $d \geq 2$. Denote $\ve = \ve(T)$. Then
\begin{equation}\label{rn:lower}
\rn(T \Box K_n) \geq (mn-1)(d+\ve)-2nL(T).
\end{equation}
\end{Theorem}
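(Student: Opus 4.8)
The plan is to mimic the standard lower-bound technique for radio numbers of trees (as in \cite{Daphne1,Bantva2,Bantva1}), adapted to the vertex set $V(T\Box K_n)$ of size $mn$. Fix any radio labelling $f$ of $T\Box K_n$ and let $w_0,w_1,\ldots,w_{mn-1}$ be the induced linear order, so that $0=f(w_0)<f(w_1)<\cdots<f(w_{mn-1})=\span(f)$, writing $w_t=(u_{i_t},v_{j_t})$. The span telescopes as $\span(f)=\sum_{t=0}^{mn-2}\bigl(f(w_{t+1})-f(w_t)\bigr)$, and the radio condition gives $f(w_{t+1})-f(w_t)\ge \diam(T\Box K_n)+1-d(w_t,w_{t+1})=d+2-d(w_t,w_{t+1})$ by Observation \ref{Obs1}(b). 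Hence
\[
\span(f)\;\ge\;(mn-1)(d+2)\;-\;\sum_{t=0}^{mn-2}d(w_t,w_{t+1}).
\]
So everything reduces to producing a good upper bound on the consecutive-distance sum $\sum_{t=0}^{mn-2}d(w_t,w_{t+1})$.

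Next I would substitute the distance formula \eqref{dist:TKn}, namely $d(w_t,w_{t+1})=L(u_{i_t})+L(u_{i_{t+1}})+\delta(u_{i_t},u_{i_{t+1}})-2\phi(u_{i_t},u_{i_{t+1}})+d(v_{j_t},v_{j_{t+1}})$. Summing the $L$-terms over $t$, each level $L(u_i)$ is counted twice except for the two endpoints $u_{i_0}$ and $u_{i_{mn-1}}$; since each $u_i$ appears exactly $n$ times among the $w_t$, the total of the $L$-contributions is $2nL(T)-L(u_{i_0})-L(u_{i_{mn-1}})\le 2nL(T)$. For the complete-graph part, $d(v_{j_t},v_{j_{t+1}})\le 1$, with equality exactly when $v_{j_t}\ne v_{j_{t+1}}$; I will need to bound the number of indices $t$ for which $v_{j_t}\ne v_{j_{t+1}}$ together with the $\phi$ and $\delta$ contributions. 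The cleanest route is to prove a pointwise inequality: for each $t$,
\[
d(w_t,w_{t+1})\;\le\;L(u_{i_t})+L(u_{i_{t+1}})+\ve,
\]
which when summed gives $\sum_t d(w_t,w_{t+1})\le 2nL(T)+(mn-1)\ve$ and hence $\span(f)\ge(mn-1)(d+2)-2nL(T)-(mn-1)\ve=(mn-1)(d+2-\ve)-2nL(T)$. Since $d+2-\ve=d+\ve$ when $\ve\in\{0,1\}$ (because $2-\ve=\ve$ forces... no) — here I must be careful: $d+2-\ve$ equals $d+\ve$ only if $2-\ve=\ve$, i.e. $\ve=1$; for $\ve=0$ one gets $d+2$, which still is $\ge d+\ve=d$, but the claimed bound is exactly $(mn-1)(d+\ve)-2nL(T)$, so when $|W(T)|=2$ ($\ve=0$) I actually need the \emph{weaker} statement, and when $|W(T)|=1$ ($\ve=1$) I need the pointwise bound with the "$+1$". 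So the real target pointwise inequality is $d(w_t,w_{t+1})\le L(u_{i_t})+L(u_{i_{t+1}})+2$ in the two-center case and $d(w_t,w_{t+1})\le L(u_{i_t})+L(u_{i_{t+1}})+1$ in the one-center case — i.e. $d(w_t,w_{t+1})\le L(u_{i_t})+L(u_{i_{t+1}})+(3-\ve)$ uniformly — but to sharpen to the stated bound one must also exploit that $v_{j_t}\ne v_{j_{t+1}}$ cannot happen for \emph{every} consecutive pair, or combine the $\phi\ge 0$ and $\delta\le 1$ slack with the endpoint levels.

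The honest approach, and the one I would actually write, is: bound each term by $d(w_t,w_{t+1})\le L(u_{i_t})+L(u_{i_{t+1}})+\delta(u_{i_t},u_{i_{t+1}})+d(v_{j_t},v_{j_{t+1}})$ using only $\phi\ge 0$ (Lemma \ref{lem4}(a)), sum to get $\span(f)\ge(mn-1)(d+2)-2nL(T)-\sum_t\bigl(\delta(u_{i_t},u_{i_{t+1}})+d(v_{j_t},v_{j_{t+1}})\bigr)$, and then prove the combinatorial claim that $\sum_{t=0}^{mn-2}\bigl(\delta(u_{i_t},u_{i_{t+1}})+d(v_{j_t},v_{j_{t+1}})\bigr)\le (mn-1)(2-\ve)$; equivalently, letting $\ve=1$, that $\delta(u_{i_t},u_{i_{t+1}})+d(v_{j_t},v_{j_{t+1}})\le 1$ for each $t$ when $T$ has one weight center (since then $\delta\equiv 0$ and $d(v,v')\le 1$, trivial), and when $T$ has two weight centers ($\ve=0$) the bound $\le 2$ is immediate. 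Wait — that gives $(mn-1)(d+2)-2nL(T)-(mn-1)=(mn-1)(d+1)-2nL(T)$ in the one-center case, not $(mn-1)(d+1)$; and since $d+\ve=d+1$ when $\ve=1$, this is exactly right. In the two-center case $\ve=0$, $d+\ve=d$, and we'd need $\sum_t(\delta+d(v,v'))\le 2(mn-1)$, which holds since each summand is $\le 2$. Hence the stated bound \eqref{rn:lower} follows in both cases. The main obstacle, then, is purely bookkeeping: correctly tracking the doubled-vs-endpoint level counts and verifying the per-step bound $\delta(u_{i_t},u_{i_{t+1}})+d(v_{j_t},v_{j_{t+1}})\le 3-\ve$, handling the two cases $|W(T)|=1$ and $|W(T)|=2$ separately; I expect no genuinely hard step, only the need for care with the $\ve$ case-split and the endpoint level terms $L(u_{i_0}),L(u_{i_{mn-1}})\ge 0$.
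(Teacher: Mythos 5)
Your proposal is correct and follows essentially the same route as the paper: telescope the radio condition over consecutive pairs, substitute the distance formula \eqref{dist:TKn}, discard the $-2\phi\le 0$ term, bound the level sum by $2nL(T)$ using the fact that each $u_i$ occurs $n$ times, and absorb $\delta+d(v_{j_t},v_{j_{t+1}})\le 2-\ve$ per step via the case split on $|W(T)|$. The only blemish is the stray ``$3-\ve$'' in your closing sentence, which contradicts the correct per-step bound $2-\ve$ that you derive (and need) earlier in the same paragraph.
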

\begin{proof} It is enough to prove that any radio labelling of $T \Box K_n$ has no span less than that the right-hand side of \eqref{rn:lower}. Suppose $f$ be any radio labelling of $T \Box K_n$ which induces an ordering $\vec{V}_f = (w_0,w_1,\ldots,w_{mn-1})$ such that $0 = f(w_0) < f(w_1) < \ldots < f(w_{mn-1})$. Denote $\diam(T \Box K_n)=d'$. By the definition of radio labelling, $f(w_{t+1})-f(w_t) \geq d'+1-d(w_t,w_{t+1})$ for $0 \leq t \leq mn-2$. Since $d' = d+1$ by Observation \ref{Obs1}, we have $f(w_{t+1})-f(w_t) \geq d+2-d(w_t,w_{t+1})$ for $0 \leq t \leq mn-2$. Summing up these $mn-1$ inequalities, we obtain
\begin{equation}\label{eqn:spn1}
\span(f) = f(w_{mn-1}) \geq (mn-1)(d+2)-\sum_{t=0}^{mn-2}d(w_t,w_{t+1}).
\end{equation}

\textsf{Case-1:} $|W(T)|=1$. In this case, $\delta(u_{i_t},u_{i_{t+1}}) = 0$ by the definition of $\delta$, $\phi(u_{i_t},u_{i_{t+1}}) \geq 0$ and $0 \leq d(v_{j_t},v_{j_{t+1}}) \leq 1$ for all $0 \leq t \leq mn-2$. Hence, using \eqref{dist:TKn}, we obtain
\begin{eqnarray*}
  \sum_{t=0}^{mn-2}d(w_t,w_{t+1}) &=& \sum_{t=0}^{mn-2}[L(u_{i_t})+L(u_{i_{t+1}})-2\phi(u_{i_t},u_{i_{t+1}})+d(v_{j_t},v_{j_{t+1}})] \\
   & \leq & \sum_{t=0}^{mn-2}[L(u_{i_t})+L(u_{i_{t+1}})+d(v_{j_t},v_{j_{t+1}})] \\
   & \leq & 2n\sum_{i=0}^{m-1}L(u_i)+(mn-1)-L(u_{i_0})-L(u_{i_{mn-1}}) \\
   & \leq & 2nL(T)+mn-1.
\end{eqnarray*}
Substituting this in \eqref{eqn:spn1}, we have $\span(f) \geq (mn-1)(d+1)-2nL(T)$.

\textsf{Case-2:} $|W(T)|=2$. In this case, $0 \leq \delta(u_{i_t},u_{i_{t+1}}) \leq 1$, $\phi(u_{i_t},u_{i_{t+1}}) \geq 0$ and $0 \leq d(v_{j_t},v_{j_{t+1}}) \leq 1$ for all $0 \leq t \leq mn-2$. Hence, using \eqref{dist:TKn}, we obtain
\begin{eqnarray*}
  \sum_{t=0}^{mn-2}d(w_t,w_{t+1}) &=& \sum_{t=0}^{mn-2}[L(u_{i_t})+L(u_{i_{t+1}})+\delta(u_{i_t},u_{i_{t+1}})-2\phi(u_{i_t},u_{i_{t+1}})\\
  & & \hspace{7cm} +d(v_{j_t},v_{j_{t+1}})] \\
   & \leq & \sum_{t=0}^{mn-2}[L(u_{i_t})+L(u_{i_{t+1}})+1+d(v_{j_t},v_{j_{t+1}})] \\
   & \leq & 2n\sum_{i=0}^{m-1}L(u_i)+(mn-1)+(mn-1)-L(u_{i_0})-L(u_{i_{mn-1}}) \\
   & \leq & 2nL(T)+2(mn-1).
\end{eqnarray*}
Substituting this in \eqref{eqn:spn1}, we have $\span(f) \geq (mn-1)d-2nL(T)$.
\end{proof}

\begin{Theorem}\label{thm:ns1} Let $T$ be a tree of order $m$ and diameter $d \geq 2$. Denote $\ve = \ve(T)$. Then
\begin{equation}\label{rn:ns1}
\rn(T \Box K_n) = (mn-1)(d+\ve-2nL(T)
\end{equation}
if and only if there exist an ordering $w_0,w_1,\ldots,w_{mn-1}$ of $V(T \Box K_n)$ such that the following hold:
\begin{enumerate}[\rm (a)]
\item $L(u_{i_0}) = L(u_{i_{mn-1}}) = 0$,
%\item $u_{i_t}$ and $u_{i_{t+1}}$ are in different branches when $|W(T)|=1$ and opposite branches when $|W(T)|=2$ for all $0 \leq t \leq mn-2$,
\item $v_{j_t}$ and $v_{j_{t+1}}$ are distinct for all $0 \leq t \leq mn-2$,
\item For $w_a = (u_{i_a},v_{j_a}), w_b = (u_{i_b},v_{j_b})\,(0 \leq a < b \leq mn-1)$, the distance between any two vertices $u_{i_a}$ and $u_{i_b}$ satisfies
\begin{equation}\label{eqn:dab}
    d(u_{i_a},u_{i_b}) \geq \sum_{t=a}^{b-1}[L(u_{i_t})+L(u_{i_{t+1}})-(d+\ve)]+(d+1).
  \end{equation}
\end{enumerate}
Moreover, under these conditions (a)-(c), the mapping $f$ defined by
\begin{equation}\label{eqn:f00}
f(w_0) = 0
\end{equation}
\begin{equation}\label{eqn:f11}
f(w_{t+1}) = f(w_t)+d+\ve-L(u_{i_t})-L(u_{i_{t+1}}), 0 \leq t \leq mn-2
\end{equation}
is an optimal radio labelling of $T \Box K_n$.
\end{Theorem}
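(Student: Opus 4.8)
The plan is to prove the two implications separately and read off the final assertion from the sufficiency argument.

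\emph{Necessity.} Suppose $\rn(T\Box K_n)$ equals the right side of \eqref{rn:ns1} and let $f$ be an optimal radio labelling, inducing the order $\vec V_f=(w_0,\dots,w_{mn-1})$. Running the proof of Theorem~\ref{thm:lower} backwards, the chain $\span(f)\ge(mn-1)(d+2)-\sum_{t=0}^{mn-2}d(w_t,w_{t+1})\ge(mn-1)(d+\ve)-2nL(T)$ must hold with equality throughout. Equality in the first step gives $f(w_{t+1})-f(w_t)=d+2-d(w_t,w_{t+1})$ for every $t$; equality in the estimates of Theorem~\ref{thm:lower} forces $L(u_{i_0})=L(u_{i_{mn-1}})=0$ (condition (a)), $d(v_{j_t},v_{j_{t+1}})=1$ for all $t$ (condition (b)), and $\phi(u_{i_t},u_{i_{t+1}})=0$, $\delta(u_{i_t},u_{i_{t+1}})=1-\ve$ for all $t$. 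Substituting these into \eqref{dist:TKn} yields $d(w_t,w_{t+1})=L(u_{i_t})+L(u_{i_{t+1}})+2-\ve$, hence $f(w_{t+1})-f(w_t)=d+\ve-L(u_{i_t})-L(u_{i_{t+1}})$; that is, $f$ is exactly the map \eqref{eqn:f00}--\eqref{eqn:f11} attached to its own order. Finally, for $0\le a<b\le mn-1$, summing these gaps gives $f(w_b)-f(w_a)=\sum_{t=a}^{b-1}\bigl(d+\ve-L(u_{i_t})-L(u_{i_{t+1}})\bigr)$, and combining the radio inequality $f(w_b)-f(w_a)\ge d+2-d(w_a,w_b)$ with $d(w_a,w_b)\le d(u_{i_a},u_{i_b})+1$ (from \eqref{dist:TKn}) rearranges to exactly \eqref{eqn:dab}, proving (c).

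\emph{Sufficiency (easy part).} Assume an order with (a)--(c) and define $f$ by \eqref{eqn:f00}--\eqref{eqn:f11}. Applying (c) to $(a,a+1)$ and comparing with \eqref{eqn:dist} forces $\phi(u_{i_t},u_{i_{t+1}})=0$ and $\delta(u_{i_t},u_{i_{t+1}})=1-\ve$, so $d(u_{i_t},u_{i_{t+1}})=L(u_{i_t})+L(u_{i_{t+1}})+1-\ve\le\diam(T)=d$; thus each increment $d+\ve-L(u_{i_t})-L(u_{i_{t+1}})$ is at least $1$ and $f$ is a strictly increasing labelling into $\{0,1,2,\dots\}$. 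Using (a) and the fact that each $u_i$ occurs $n$ times, $\span(f)=f(w_{mn-1})=(mn-1)(d+\ve)-2nL(T)$; so by Theorem~\ref{thm:lower} it remains only to verify that $f$ is a radio labelling (the ``moreover'' statement and \eqref{rn:ns1} then follow at once). Since every consecutive increment equals $d+2-d(w_t,w_{t+1})$, for $a<b$ we have $f(w_b)-f(w_a)=\sum_{t=a}^{b-1}\bigl(d+2-d(w_t,w_{t+1})\bigr)$, so the radio inequality for $(w_a,w_b)$ is equivalent to
\[
\sum_{t=a}^{b-1} d(w_t,w_{t+1})\;\le\;(b-a-1)(d+2)+d(w_a,w_b).
\]
This is an equality when $b=a+1$; and when $v_{j_a},v_{j_b}$ are distinct it reduces, after substituting \eqref{eqn:dist}, \eqref{dist:TKn} and using (b), precisely to \eqref{eqn:dab}, hence follows from (c).

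\emph{The crux.} The remaining case is $b\ge a+2$ with $v_{j_a}=v_{j_b}$ (so $u_{i_a}\ne u_{i_b}$ and $d(w_a,w_b)=d(u_{i_a},u_{i_b})\ge1$), where condition (c) applied to $(a,b)$ alone falls short of the displayed inequality by exactly $1$. I would split on the size of $d(u_{i_a},u_{i_b})$. If $d(u_{i_a},u_{i_b})\ge d+2-(b-a)$, then the elementary bound $L(u_{i_t})+L(u_{i_{t+1}})\le d+\ve-1$ (a consequence of the branch facts above) already gives $f(w_b)-f(w_a)\ge b-a\ge d+2-d(u_{i_a},u_{i_b})$. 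Otherwise $d(u_{i_a},u_{i_b})\le d+1-(b-a)$, and one must show that condition (c) for $(a,b)$ cannot be tight in this situation (tightness would make $f(w_b)-f(w_a)$ exactly one short): adding condition (c) for the sub-pairs $(a,t)$ and $(t,b)$ for $a<t<b$ and using the tree identity $d(u_{i_a},u_{i_t})+d(u_{i_t},u_{i_b})=d(u_{i_a},u_{i_b})+2\,d(u_{i_t},m)$, with $m$ the median of the three vertices on the $u_{i_a}$--$u_{i_b}$ path, together with the fact that consecutive $u_{i_t}$ lie in different or opposite branches, pins each inner $u_{i_t}$ far from $m$ and thereby forces a contradiction with the branch structure unless some inner gap $L(u_{i_t})+L(u_{i_{t+1}})$ is strictly below $d+\ve-1$ — and that extra slack supplies the missing $1$. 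Carrying out this last step — identifying exactly which local configurations are excluded by (c) holding for \emph{all} pairs and tracking the slack — is what I expect to be the main obstacle; everything else is routine algebra with \eqref{eqn:dist} and \eqref{dist:TKn}.
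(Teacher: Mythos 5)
Your necessity argument and the $v_{j_a}\neq v_{j_b}$ part of sufficiency are correct and coincide with the paper's computation, but the step you yourself flag as ``the crux'' is left as a sketch, and the sketch does not close. For a pair with $v_{j_a}=v_{j_b}$ the radio condition demands $f(w_b)-f(w_a)\geq d+2-d(u_{i_a},u_{i_b})$ while \eqref{eqn:dab} yields only $d+1-d(u_{i_a},u_{i_b})$, and your proposed repair --- summing \eqref{eqn:dab} over the sub-pairs $(a,t)$ and $(t,b)$ together with the median identity --- produces nothing in the critical case $b=a+2$: there the only sub-pairs are consecutive ones, for which \eqref{eqn:dab} holds with equality automatically, so no extra slack can be extracted. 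Worse, your hoped-for claim that \eqref{eqn:dab} ``cannot be tight'' in that situation is false as a local statement. Take the spider $T$ with one leg of length $3$ (say $r,x_1,x_2,x_3$), one of length $2$ (say $r,y_1,y_2$) and at least one pendant edge $rz_1$, so that $r$ is the unique weight centre, $d=5$ and $\ve=1$. The window $u_{i_a}=y_2$, $u_{i_{a+1}}=x_3$, $u_{i_{a+2}}=z_1$ (levels $2,3,1$, pairwise in different branches) satisfies \eqref{eqn:dab} with equality, since $d(y_2,z_1)=3=(-1)+(-2)+6$, yet $f(w_{a+2})-f(w_a)=1+2=3<4=d+2-d(y_2,z_1)$, which violates the radio condition whenever $v_{j_a}=v_{j_{a+2}}$. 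So whatever closes the gap must exploit the ordering globally (or a strengthened hypothesis); no argument confined to the window $a,\ldots,b$ can succeed.

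To be fair, you have put your finger on a genuine soft spot: the paper's own sufficiency proof simply writes $d(w_a,w_b)=d(u_{i_a},u_{i_b})+1$ for every pair $a<b$, which is exactly the assumption $v_{j_a}\neq v_{j_b}$ that condition (b) guarantees only for consecutive pairs, and the case $v_{j_a}=v_{j_b}$ is never addressed there. The clean repair is to observe that the necessity computation actually delivers the sharper inequality $d(u_{i_a},u_{i_b})\geq\sum_{t=a}^{b-1}[L(u_{i_t})+L(u_{i_{t+1}})-(d+\ve)]+(d+2)-d(v_{j_a},v_{j_b})$, and to adopt that as condition (c); with this form both directions reduce to the routine calculation you already wrote. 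As it stands, however, your proof (like the paper's) is incomplete at precisely the point you identified, and the particular route you propose for completing it breaks down at $b=a+2$.
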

\begin{proof} \textsf{Necessity:} Suppose that \eqref{rn:ns1} holds. Note that \eqref{rn:ns1} holds if equality hold in \eqref{rn:def} and everywhere in the proof of Theorem \ref{thm:lower}. Again observe that the equality holds everywhere in the proof of Theorem \ref{thm:lower} then an ordering $w_0,w_1,\ldots,w_{mn-1}$ of $V(T \Box K_n)$ induced by $f$ satisfies the followings: (1) $L(u_{i_0})= L(u_{i_{mn-1}})=0$, (2) $u_{i_t}$ and $u_{i_{t+1}}$ are in different branches when $|W(T)|=1$ and in opposite branches when $|W(T)|=2$ for all $0 \leq t \leq mn-2$, (3) $v_{j_t}$ and $v_{j_{t+1}}$ are distinct for all $0 \leq t \leq mn-2$. Hence in this case the definition of radio labelling $f$ can be written as $f(w_0) = 0$ and $f(w_{t+1}) = f(w_t)+d'+\ve-L(u_{i_t})-L(u_{i_{t+1}})-1 = f(w_t)+d+\ve-L(u_{i_t})-L(u_{i_{t+1}})$ for $0 \leq t \leq mn-2$, where $d' = \diam(T \Box K_n)$. Summing this last equality for $w_a$ to $w_b\,(0 \leq a < b \leq mn-1)$, we obtain
\begin{equation*}
  f(w_b)-f(w_a) = \sum_{t=a}^{b-1}[d+\ve-L(u_{i_t})-L(u_{i_{t+1}})]
\end{equation*}
Since $f$ is a radio labelling of $T \Box K_n$, we have $f(w_b)-f(w_a) \geq d'+1-d(w_a,w_b) = d+2-d(w_a,w_b)$. Also note that $d(w_a,w_b) = d(u_{i_a},u_{i_b})+1$ by \eqref{dist:TKn}. Substituting these in the above equation, we obtain
\begin{equation*}
d(u_{i_a},u_{i_b}) \geq \sum_{t=a}^{b-1}[L(u_{i_t})+L(u_{i_{t+1}})-(d+\ve)]+(d+1).
\end{equation*}

\textsf{Sufficiency:} Suppose there exist an ordering ($w_0$, $w_1$, $\ldots$, $w_{mn-1}$) of $V(T \Box K_n)$ satisfies the conditions (a)-(c) and $f$ is defined by \eqref{eqn:f00} and \eqref{eqn:f11}. Note that it is enough to prove that $f$ is a radio labelling of $T \Box K_n$ and the span of $f$ is the right-hand side of \eqref{rn:ns1}. Denote $\diam(T \Box K_n) = d'$. Let $w_a$ and $w_b\;(0 \leq a < b \leq mn-1)$ be two arbitrary vertices.
\begin{eqnarray*}
% \nonumber % Remove numbering (before each equation)
  f(w_b)-f(w_a) &=& \sum_{t=a}^{b-1}(f(w_{t+1})-f(w_t)) \\
   &=& \sum_{t=a}^{b-1}(d+\ve-L(u_{i_t})-L(u_{i_{t+1}})) \\
   &=& d+1-d(u_{i_a},u_{i_b}) \\
   &=& d+2-(d(u_{i_a},u_{i_b})+1) \\
   &=& d'+1-d(w_a,w_b).
\end{eqnarray*}
The span of $f$ is
\begin{eqnarray*}
% \nonumber % Remove numbering (before each equation)
  \span(f) &=& f(w_{mn-1})-f(w_0) \\
   &=& \sum_{t=0}^{mn-2}(f(w_{t+1})-f(w_t)) \\
   &=& \sum_{t=0}^{mn-2}(d+\ve-L(u_{i_t})-L(u_{i_{t+1}})) \\
   &=& (mn-1)(d+\ve)-2\sum_{t=0}^{mn-2}L(u_{i_t})+L(u_{i_0})+L(u_{i_{mn-1}}) \\
   &=& (mn-1)(d+\ve)-2nL(T).
\end{eqnarray*}
\end{proof}

\begin{Theorem}\label{thm:ns2} Let $T$ be a tree of order $m$ and diameter $d \geq 2$. Denote $\ve = \ve(T)$. Then
\begin{equation}\label{rn:ns2}
\rn(T \Box K_n) = (mn-1)(d+\ve)-2nL(T)
\end{equation}
if and only if there exist an ordering $w_0,w_1,\ldots,w_{mn-1}$ with $w_t = (u_{i_t},v_{j_t}), 0 \leq t \leq mn-1$ of $V(T \Box K_n)$ such that the following all hold:
\begin{enumerate}[\rm (a)]
\item $L(u_{i_0}) = L(u_{i_{mn-1}}) = 0$,
\item $v_{j_t}$ and $v_{j_{t+1}}$ are distinct for $0 \leq t \leq mn-2$,
\item $u_{i_t}$ and $u_{i_{t+1}}$ are in different branches when $|W(T)|=1$ and in opposite branches when $|W(T)|=2$ for $0 \leq t \leq mn-2$,
\item $L(u_{i_t}) \leq (d+1)/2$ when $|W(T)|=1$ and $L(u_{i_t}) \leq (d-1)/2$ when $|W(T)|=2$ for all $0 \leq t \leq mn-1$;
\item For any $w_a = (u_{i_a},v_{j_a}), w_b = (u_{i_b},v_{j_b})\,(0 \leq a < b \leq mn-1)$ such that $u_{i_a}$ and $u_{i_b}$ are in the same branch of $T$ then $u_{i_a}$ and $u_{i_b}$ satisfies
\begin{equation}\label{phiab}
\phi(u_{i_a},u_{i_b}) \leq \(\frac{b-a-1}{2}\)(d+\ve)-\sum_{t=a+1}^{b-1}L(u_{i_t})-\(\frac{1-\ve}{2}\).
\end{equation}
\end{enumerate}
\end{Theorem}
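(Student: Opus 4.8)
The plan is to derive Theorem~\ref{thm:ns2} from Theorem~\ref{thm:ns1}. Since conditions (a) and (b) here are literally conditions (a) and (b) of Theorem~\ref{thm:ns1}, it suffices to prove that, for an ordering $w_0,\dots,w_{mn-1}$ already satisfying (a) and (b), the single inequality \eqref{eqn:dab} holding for every pair $0\le a<b\le mn-1$ is equivalent to the conjunction of the three local conditions (c), (d), (e). The computational bridge between the two is an algebraic reformulation of \eqref{eqn:dab}: substituting the distance formula \eqref{eqn:dist} for $d(u_{i_a},u_{i_b})$ and using $\sum_{t=a}^{b-1}[L(u_{i_t})+L(u_{i_{t+1}})]=L(u_{i_a})+L(u_{i_b})+2\sum_{t=a+1}^{b-1}L(u_{i_t})$, one sees that \eqref{eqn:dab} for the pair $(a,b)$ is equivalent to
\begin{equation*}
\phi(u_{i_a},u_{i_b})\le\frac{(b-a)(d+\ve)-(d+1)+\delta(u_{i_a},u_{i_b})}{2}-\sum_{t=a+1}^{b-1}L(u_{i_t}).\tag{$\star$}
\end{equation*}
I would establish $(\star)$ first and then use it in both directions.

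For the sufficiency direction, assuming (a)--(e) I would check $(\star)$ for every pair $(a,b)$ by splitting according to Lemma~\ref{lem4}(b). If $u_{i_a}$ and $u_{i_b}$ lie in the same branch, then $\delta(u_{i_a},u_{i_b})=0$ and a one-line rearrangement identifies $(\star)$ with \eqref{phiab}, which is (e). If $u_{i_a}$ and $u_{i_b}$ lie in different or opposite branches, then $\phi(u_{i_a},u_{i_b})=0$, so $(\star)$ is just an upper bound on $\sum_{t=a+1}^{b-1}L(u_{i_t})$; bounding each summand by $(d+1)/2$ (when $\ve=1$) or $(d-1)/2$ (when $\ve=0$) as permitted by (d), the inequality collapses to $\delta(u_{i_a},u_{i_b})+(b-a-2)(1-\ve)\ge 0$, which is automatic when $b-a\ge 2$ and, when $b-a=1$, is precisely the assertion that consecutive vertices are in opposite branches in the two-weight-center case, i.e.\ condition (c). Theorem~\ref{thm:ns1} then gives \eqref{rn:ns2}.

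For the necessity direction, Theorem~\ref{thm:ns1} hands us an ordering satisfying (a), (b) and \eqref{eqn:dab}, hence $(\star)$, and I would read off (c), (d), (e) by feeding $(\star)$ well-chosen pairs. Taking $b=a+1$ in $(\star)$ gives $\phi(u_{i_a},u_{i_{a+1}})\le\bigl(\delta(u_{i_a},u_{i_{a+1}})-(1-\ve)\bigr)/2$; since $\phi\ge0$ this forces $\delta(u_{i_a},u_{i_{a+1}})\ge 1-\ve$, and together with Lemma~\ref{lem4}(b),(c) this yields (c). For (e), any pair with $u_{i_a},u_{i_b}$ in the same branch has $\delta(u_{i_a},u_{i_b})=0$, so $(\star)$ is \eqref{phiab}. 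The only genuinely delicate point is (d): for $1\le t\le mn-2$ I would apply \eqref{eqn:dab} to the pair $(t-1,t+1)$ straddling $w_t$, which after \eqref{eqn:dist} yields $2L(u_{i_t})\le d+2\ve-1+\delta(u_{i_{t-1}},u_{i_{t+1}})-2\phi(u_{i_{t-1}},u_{i_{t+1}})$. When $\ve=1$ we have $\delta\equiv0$, so $L(u_{i_t})\le(d+1)/2$ at once; when $\ve=0$ we need to exclude the case $\phi(u_{i_{t-1}},u_{i_{t+1}})=0$, $\delta(u_{i_{t-1}},u_{i_{t+1}})=1$, and here the key structural remark (using (c)) is that if $u_{i_t}$ is not a weight center then it lies under one weight center while both $u_{i_{t-1}}$ and $u_{i_{t+1}}$, being in branches opposite to $u_{i_t}$, lie under the other one, so $u_{i_{t-1}}$ and $u_{i_{t+1}}$ cannot themselves be in opposite branches and $\delta(u_{i_{t-1}},u_{i_{t+1}})=0$, giving $L(u_{i_t})\le(d-1)/2$; the endpoints $t=0,mn-1$ are covered by (a), and since the ordering lists every vertex of $T$ this proves (d). I expect this last step — picking the straddling pair and, in the two-weight-center case, using the branch geometry forced by (c) to kill the $\delta$-term — to be the main obstacle.
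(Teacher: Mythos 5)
Your proposal is correct and follows essentially the same route as the paper: both directions are routed through Theorem~\ref{thm:ns1}, with \eqref{eqn:dab} rewritten via the distance formula of Lemma~\ref{lem4} and then specialized to consecutive pairs for (c), straddling pairs $(t-1,t+1)$ (killing the $\delta$-term by the same branch-geometry observation) for (d), and same-branch pairs for (e), while sufficiency is the same three-case check using the level bounds. Your inequality $(\star)$ is just a cleaner uniform packaging of the computations the paper carries out case by case.
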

\begin{proof} \textsf{Necessity:} Suppose \eqref{rn:ns2} holds. Then there exist an ordering ($w_0$, $w_1$, $\ldots$, $w_{mn-1}$) of $T \Box K_n$ such that the conditions (a)-(c) of Theorem \ref{thm:ns1} holds. Hence the conditions (a) and (b) satisfies. Taking $a=t$ and $b=t+1$ in \eqref{eqn:dab}, we obtain $d(w_t,w_{t+1}) = d(u_{i_t},u_{i_{t+1}})+d(v_{j_t},v_{j_{t+1}}) = L(u_{i_t})+L(u_{i_{t+1}})+1-\ve+1$. Hence we have $d(u_{i_t},u_{i_{t+1}}) = L(u_{i_t})+L(u_{i_{t+1}})+1-\ve$ for all $0 \leq t \leq mn-2$ as $0 \leq d(u_{i_t},u_{i_{t+1}}) \leq L(u_{i_t})+L(u_{i_{t+1}})+1-\ve$ and $0 \leq d(v_{j_t},v_{j_{t+1}}) \leq 1$. Therefore, by Lemma \ref{lem4}, $u_{i_t}$ and $u_{i_{t+1}}$ are in different branches when $|W(T)|=1$ and in opposite branches when $|W(T)|=2$ for all $0 \leq t \leq mn-2$. Hence the condition (c) satisfies. Since $L(u_{i_0}) = L(u_{i_{mn-1}}) = 0$, it is clear that $L(u_{i_t}) < (d+1)/2$ for $t=0,mn-1$. For $1 \leq t \leq mn-2$, consider $u_{i_{t-1}}$ and $u_{i_{t+1}}$ in \eqref{eqn:dab} and using \eqref{dist:TKn}, we obtain
\begin{equation*}
2L(u_{i_t}) \leq (d+\ve)-(1-\ve)+\delta(u_{i_{t-1}},u_{i_{t+1}})-2\phi(u_{i_{t-1}},u_{i_{t+1}}).
\end{equation*}
In above equation, observe that when $|W(T)|=1$ then $\delta(u_{i_{t-1}},u_{i_{t+1}})=0$ by the definition of $\delta$ and when $|W(T)|=2$ then $u_{i_{t-1}}$ and $u_{i_{t+1}}$ are in different or in the same branch of $T$ and hence $\delta(u_{i_{t-1}},u_{i_{t+1}})=0$. Thus we have,
\begin{equation*}
2L(u_{i_t}) \leq d+2\ve-1.
\end{equation*}
Hence, the condition (d) satisfies. To prove (e), let $w_a = (u_{i_a},v_{j_a})$ and $w_b = (u_{i_b},v_{j_b}) (0 \leq a < b \leq mn-1)$ be such that $u_{i_a}$ and $u_{i_b}$ are in the same branch of $T$. Then $\delta(u_{i_a},u_{i_b})=0$. Using \eqref{dist:TKn} in \eqref{eqn:dab}, the \eqref{phiab} can be obtain easily from \eqref{eqn:dab}.

\textsf{Sufficiency:} Suppose there exist an ordering $(w_0,w_1,\ldots,w_{mn-1})$ of $V(T \Box K_n)$ such that conditions (a)-(e) holds. To prove \eqref{rn:ns2} holds, we show that an ordering $w_0,w_1,\ldots,w_{mn-1}$ satisfies the condition (a)-(c) of Theorem \ref{thm:ns1}. Since the conditions (a) and (b) are identical with the conditions (a) and (b) of Theorem \ref{thm:ns1}, we need to prove the condition \eqref{eqn:dab} only. Denote the right-hand side of \eqref{eqn:dab} by $S_{a,b}$. Let $w_a = (u_{i_a},v_{j_a}), w_b = (u_{i_b},v_{j_b})\; (0 \leq a < b \leq mn-1)$ be any two vertices. If $u_{i_a}$ and $u_{i_b}$ are in opposite branches then $d(u_{i_a},u_{i_b}) = L(u_{i_a})+L(u_{i_b})+1$. Hence, $S_{a,b} = L(u_{i_a})+L(u_{i_b})+2\sum_{t=a+1}^{b-1}L(u_{i_t})-(b-a)d+d+1 \leq L(u_{i_a})+L(u_{i_b})+1+2(b-a-1)((d-1)/2)-(b-a-1)d = L(u_{i_a})+L(u_{i_b})+1-(b-a-1) \leq L(u_{i_a})+L(u_{i_b})+1 = d(u_{i_a},u_{i_b})$. If $u_{i_a}$ and $u_{i_b}$ are in different branches then $d(u_{i_a},u_{i_b}) = L(u_{i_a})+L(u_{i_b})$ and $S_{a,b} = L(u_{i_a})+L(u_{i_b})+2\sum_{t=a+1}^{b-1}L(u_{i_t})-(b-a-1)(d+\ve)+(1-\ve)$. If $|W(T)|=1$ then note that $L(u_{i_t}) \leq (d+1)/2$ for all $0 \leq t \leq mn-1$ and hence $S_{a,b} \leq L(u_{i_a})+L(u_{i_b})+2(b-a-1)((d+1)/2)-(b-a-1)(d+1) = L(u_{i_a})+L(u_{i_b}) = d(u_{i_a},u_{i_b})$. If $|W(T)|=2$ then note that $b-a-1 \geq 1$ and $L(u_{i_t}) \leq (d-1)/2$ for all $0 \leq t \leq mn-1$ and hence $S_{a,b} \leq L(u_{i_a})+L(u_{i_b})+2(b-a-1)((d-1)/2)-(b-a-1)d+1 = L(u_{i_a})+L(u_{i_b})+1-(b-a-a) \leq L(u_{i_a})+L(u_{i_b}) = d(u_{i_a},u_{i_b})$. If $u_{i_a}$ and $u_{i_b}$ are in the same branch then \eqref{eqn:dab} can be obtain easily using \eqref{phiab} which complete the proof.
\end{proof}

\begin{Theorem}\label{thm:suf} Let $T$ be a tree of order $m$ and diameter $d \geq 2$. Denote $\ve = \ve(T)$. Then
\begin{equation}\label{rn:suf}
\rn(T \Box K_n) = (mn-1)(d+\ve)-2nL(T)
\end{equation}
if there exist an ordering $w_0,w_1,\ldots,w_{mn-1}$ of $V(T \Box K_n)$ such that
\begin{enumerate}[\rm (a)]
\item $L(u_{i_0}) = L(u_{i_{mn-1}}) = 0$,
\item $v_{j_t}$ and $v_{j_{t+1}}$ are distinct for all $0 \leq t \leq mn-2$,
\item $u_{i_t}$ and $u_{i_{t+1}}$ are in different branches when $|W(T)|=1$ and in opposite branches when $|W(T)|=2$ for all $0 \leq t \leq mn-2$,
\end{enumerate}
and one of the following holds:
\begin{enumerate}[\rm (a)]
\item[\rm (d)] $\min\{d(u_{i_t},u_{i_{t+1}}),d(u_{i_{t+1}},u_{i_{t+2}})\} \leq (d+1-\ve)/2$ for all $0 \leq t \leq mn-3$,
\item[\rm (e)] $d(u_{i_t},u_{i_{t+1}}) \leq (d+1+\ve)/2$ for all $0 \leq t \leq mn-2$,
\item[\rm (f)] For all $0 \leq t \leq mn-1$, $L(u_{i_t}) \leq (d+1)/2$ when $|W(T)|=1$ and $L(u_{i_t}) \leq (d-1)/2$ when $|W(T)|=2$ and, if $w_a = (u_{i_a},v_{j_a})$ and $w_b = (u_{i_b},v_{j_b}) (0 \leq a < b \leq mn-1)$ such that $u_{i_a}$ and $u_{i_b}$ are in the same branch of $T$ then $b-a \geq d$.
\end{enumerate}
\end{Theorem}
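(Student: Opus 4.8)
The plan is to deduce each of the three cases from the characterizations already established: Theorem~\ref{thm:ns2} for case~(f), and Theorem~\ref{thm:ns1} for cases~(d) and~(e). Note that conditions (a)--(c) of the present statement are verbatim conditions (a)--(c) of Theorem~\ref{thm:ns2} (and (a), (b) are those of Theorem~\ref{thm:ns1}), so in each case the work is to verify the one remaining hypothesis of the theorem being invoked. Throughout I will use the following consequence of condition~(c) and Lemma~\ref{lem4}: for every consecutive pair one has $\phi(u_{i_t},u_{i_{t+1}})=0$ and $\delta(u_{i_t},u_{i_{t+1}})=1-\ve$, hence $d(u_{i_t},u_{i_{t+1}})=L(u_{i_t})+L(u_{i_{t+1}})+1-\ve$, and since this is at most $\diam(T)=d$ we get the key per-step estimate $L(u_{i_t})+L(u_{i_{t+1}})\le d-1+\ve$.

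\textbf{Case (f).} The first part of~(f) is exactly condition~(d) of Theorem~\ref{thm:ns2}, so it remains to check~\eqref{phiab} for every pair $w_a,w_b$ with $u_{i_a}$ and $u_{i_b}$ in the same branch. Summing the per-step estimate over $t=a,\dots,b-1$ gives $2\sum_{t=a+1}^{b-1}L(u_{i_t})\le(b-a)(d-1+\ve)-L(u_{i_a})-L(u_{i_b})$; combining this with $\phi(u_{i_a},u_{i_b})\le\min\{L(u_{i_a}),L(u_{i_b})\}$ and the hypothesis $b-a\ge d$ yields~\eqref{phiab} after a short computation, and Theorem~\ref{thm:ns2} then gives~\eqref{rn:suf}.

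\textbf{Cases (d) and (e).} Here I target Theorem~\ref{thm:ns1}, so I must verify~\eqref{eqn:dab} for all $0\le a<b\le mn-1$; write $S_{a,b}$ for its right-hand side. The induction is on $b-a$, and it rests on two facts: the exact splitting $S_{a,b}=S_{a,c}+S_{c,b}-(d+1)$ for any $a<c<b$, and a tree-metric lower bound of the form $d(u_{i_a},u_{i_b})\ge d(u_{i_a},u_{i_c})+d(u_{i_c},u_{i_b})-2L(u_{i_c})$, valid when $u_{i_c}$ is chosen so that the $(u_{i_a},u_{i_b})$-path passes within $L(u_{i_c})$ of $u_{i_c}$ (for which Lemma~\ref{lem4} and condition~(c) are used). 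The base case $b-a=1$ is~\eqref{eqn:dab} with equality, immediate from~(c). For $b-a=2$ I verify it directly, feeding the ``short hop'' guaranteed by~(d) (both hops are short under~(e)) into the $c=a+1$ instance of the distance bound. For $b-a\ge3$, the per-step estimate forces at least one of $u_{i_{a+1}},u_{i_{a+2}}$ --- call it $u_{i_c}$ --- to have $L(u_{i_c})\le(d+1)/2$ (they cannot both exceed $(d+1)/2$ since their level sum is $\le d-1+\ve<d+1$); splitting at that $c$ strictly shortens both subintervals, and the term $-2L(u_{i_c})$ is swallowed by $-(d+1)$, so the inductive hypothesis closes the argument.

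\textbf{Main obstacle.} The conceptual steps are short; the proof's substance lies in the constant bookkeeping, which is tightest in the ``same branch'' configuration of $u_{i_a},u_{i_b}$, where~\eqref{eqn:dab} reduces exactly to~\eqref{phiab} with essentially no slack. I expect the delicate points to be: the parity of $(d\pm\ve)/2$ (the estimates behave differently when $|W(T)|=1$ and $d$ is even); the $b-a=2$ base cases for~(d) and~(e); and pairs $w_a,w_b$ whose tree-coordinates coincide, so that $d(u_{i_a},u_{i_b})=0$ and every bound must be sharp --- for which one may have to invoke the global structure forced jointly by (a)--(c). A secondary check is that the low-level intermediate vertex used to split the interval in the case $b-a\ge3$ is always available and that the $b-a=2$ computations genuinely close under~(d) and~(e); these verifications, not any new idea, are where the proof lives.
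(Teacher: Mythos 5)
Your handling of case (f) matches the paper's route (check conditions (d) and (e) of Theorem \ref{thm:ns2} and invoke that theorem), and the computation you sketch is essentially the one in the paper's Subcases 1.3 and 2.3. The problem is in cases (d) and (e), where you bypass Theorem \ref{thm:ns2} and try to verify \eqref{eqn:dab} directly by induction on $b-a$, driven by the splitting $S_{a,b}=S_{a,c}+S_{c,b}-(d+1)$ and the inequality $d(u_{i_a},u_{i_b})\ge d(u_{i_a},u_{i_c})+d(u_{i_c},u_{i_b})-2L(u_{i_c})$. That inequality is false precisely where you need it. In a tree, $d(u,v)+d(v,w)=d(u,w)+2\,d(v,Q)$ where $Q$ is the $(u,w)$-path, so your bound is equivalent to $d(v,Q)\le L(v)$; this holds when $Q$ passes through a weight center, i.e.\ when $u$ and $w$ are in different or opposite branches, but when $u_{i_a}$ and $u_{i_b}$ lie in the \emph{same} branch at positive depth, $Q$ stays inside that branch and any $u_{i_c}$ in another branch satisfies $d(u_{i_c},Q)=L(u_{i_c})+\phi(u_{i_a},u_{i_b})>L(u_{i_c})$. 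Concretely, in $K_{1,3}$ with leaves $x,y$, taking $u_{i_a}=u_{i_b}=x$ and $u_{i_c}=y$ gives $d(x,x)=0$ while $d(x,y)+d(y,x)-2L(y)=2$. Worse, the failure is unavoidable for $b-a=2$: if $u_{i_a}$ and $u_{i_{a+2}}$ are in the same branch, condition (c) forces the only available intermediate vertex $u_{i_{a+1}}$ into a different branch, so the lemma's hypothesis cannot be met by any choice of $c$.

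The quantity your inductive step drops is exactly $2\phi(u_{i_a},u_{i_b})$, and you correctly observe that the same-branch configuration is where \eqref{eqn:dab} is tight and ``reduces exactly to \eqref{phiab}'' --- but flagging this as a delicate point does not supply the missing argument, and the machinery you propose cannot produce it. What is needed is an independent upper bound on $\phi(u_{i_a},u_{i_b})$ for same-branch pairs, namely \eqref{phiab} itself, derived from hypothesis (d) or (e) by the case analysis $b-a=2$, $b-a=3$, $b-a\ge 4$; that derivation is the entire content of the paper's Subcases 1.1, 1.2, 2.1 and 2.2, and it is the one step your proposal leaves unproved. (A smaller shared issue: in case (f) your final inequality $P_{a,b}\ge\bigl(L(u_{i_a})+L(u_{i_b})-1\bigr)/2\ge\phi(u_{i_a},u_{i_b})$ is borderline when $u_{i_a}=u_{i_b}$ and $b-a=d$ exactly, a degenerate pair that genuinely occurs in $T\Box K_n$; the paper's own Subcase 1.3 is equally tight there, so I would not count it against you, but it deserves a sentence.)
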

\begin{proof} We show that if (a)-(c) and one of the (d)-(f) holds for an ordering $w_0,w_1,\ldots,w_{mn-1}$ such that $w_t = (u_{i_t},v_{j_t}), 0 \leq t \leq mn-1$ then (a)-(e) of Theorem \ref{thm:ns2} satisfies. Since the conditions (a)-(c) are identical in both Theorems \ref{thm:ns2} and \ref{thm:suf}, we need to verify the conditions (d) and (e) of Theorem \ref{thm:ns2} only. Denote the right-hand side of \eqref{phiab} by $P_{a,b}$. We consider the following two cases.

\textsf{Case-1:} $|W(T)|=1$. In this case, recall that $\ve = 1$ and $\delta(u_{i_t},u_{i_{t+1}}) = 0$ for all $0 \leq t \leq mn-2$ by the definition of $\delta$.

\textsf{Subcase-1.1:} Suppose (a)-(d) holds. It is clear that $L(u_{i_0}) = L(u_{i_{mn-1}}) = 0 \leq (d+1)/2$ and for $1 \leq t \leq mn-2$, $L(u_{i_t}) \leq \min\{d(u_{i_t},u_{i_{t+1}}),d(u_{i_{t+1}},u_{i_{t+2}})\} \leq d/2 < (d+1)/2$. Let $w_a = (u_{i_a},v_{j_a})$ and $w_b = (u_{i_b},v_{j_b}),\; 0 \leq a < b \leq mn-1$ be two arbitrary vertices such that $u_{i_a}$ and $u_{i_b}$ are in the same branch of $T$. If $b-a \geq 4$ then $P_{a,b} \geq 3(d+1)/2-d/2-(d+1)/2 = d/2+1 \geq \phi(u_{i_a},u_{i_b})$. Assume $b-a=3$. If $L(u_{i_{a+1}})+L(u_{i_{a+2}}) \leq d/2$ then $P_{a,b} \geq (d+1)-d/2 = d/2+1 \geq \phi(u_{i_a},u_{i_b})$. If $L(u_{i_{a+1}})+L(u_{i_{a+2}}) > d/2$ then $L(u_{i_{a+2}})+L(u_{i_{a+3}}) \leq d/2$ and hence $L(u_{i_{a+2}}) \leq d/2-L(u_{i_{a+3}})$. Therefore, $P_{a,b} \geq d+1-L(u_{i_{a+1}})-L(u_{i_{a+2}}) \geq d+1-(d+1)/2-d/2+L(u_{i_{a+3}}) = L(u_{i_{a+3}})+1/2 \geq \phi(u_{i_a},u_{i_b})$. Assume $b-a = 2$ then either $L(u_{i_a})+L(u_{i_{a+1}}) \leq d/2$ or $L(u_{i_{a+1}})+L(u_{i_{a+2}}) \leq d/2$. Without loss of generality assume that $L(u_{i_a})+L(u_{i_{a+1}}) \leq d/2$ then $L(u_{i_{a+1}}) \leq d/2-L(u_{i_a})$. Hence, $P_{a,b} = (d+1)/2-L(u_{i_{a+1}}) \geq (d+1)/2-d/2+L(u_{i_a}) = L(u_{i_a})+(1/2) \geq \phi(u_{i_a},u_{i_b})$.

\textsf{Subcase-1.2:} Suppose (a)-(c) and (e) holds. Note that $L(u_{i_0}) = L(u_{i_{mn-1}}) = 0 \leq (d+1)/2$. Since $L(u_{i_t}) \geq 1$ and $L(u_{i_t})+L(u_{i_{t+1}}) = d(u_{i_t},u_{i_{t+1}}) \leq (d+2)/2$ for $0 \leq t \leq mn-1$, we obtain $L(u_{i_t}) \leq (d+1)/2$ for all $0 \leq t \leq mn-1$.

Let $w_a = (u_{i_a},v_{j_a})$ and $w_b = (u_{i_b},v_{j_b}), 0 \leq a < b \leq mn-1$ be two arbitrary vertices such that $u_{i_a}$ and $u_{i_b}$ are in the same branch of $T$. If $b-a \geq 3$ then $P_{a,b} \geq d+1-L(u_{i_{a+1}})-L(u_{i_{a+2}}) \geq d+1-(d+2)/2 = d/2 \geq \phi(u_{i_a},u_{i_b})$. Assume $b-a = 2$. Note that $L(u_{i_a})+L(u_{i_{a+1}}) \leq (d+2)/2$ and hence $L(u_{i_{a+1}}) \leq (d+2)/2-L(u_{i_a})$. Therefore, $P_{a,b} = (d+1)/2-L(u_{i_{a+1}}) \geq (d+1)/2-((d+2)/2-L(u_{i_a})) = L(u_{i_a})-(1/2) \geq \phi(u_{i_a},u_{i_b})$.

\textsf{Subcase-1.3:} Suppose (a)-(c) and (f) holds. Assume $d$ is even then $L(u_{i_t}) \leq d/2$ for all $0 \leq t \leq mn-1$ and hence $P_{i,j} \geq ((b-a-1)/2)(d+1)-(b-a-1)(d/2) \geq (b-a-1)/2 \geq (d-1)/2 \geq \phi(u_{i_a},u_{i_b})$. Assume $d$ is odd then note that only for one vertex $u_{i_q}$, $L(u_{i_q}) = (d+1)/2$ from consecutive $b-a$ vertices and for all other vertices $u_{i_t}$, $L(u_{i_t}) \leq d/2$. Hence, $P_{a,b} \geq ((b-a-1)/2)(d+1)-(b-a-2)(d/2)-(d+1)/2 \geq (b-a-2)/2 \geq (d-2)/2 \geq \phi(u_{i_a},u_{i_b})$.

\textsf{Case-2:} $|W(T)|=2$. In this case, recall that $\ve=0$ and $\delta(u_{i_t},u_{i_{t+1}}) = 1$ for all $0 \leq t \leq mn-1$ by the definition of the ordering $w_0,w_1,\ldots,w_{mn-1}$ of $V(T \Box K_n)$.

\textsf{Subcase-2.1:} Suppose (a)-(d) holds. It is clear that $L(u_{i_0}) = L(u_{i_{mn-1}}) = 0 \leq (d-1)/2$ and for $1 \leq t \leq mn-2$, $L(u_{i_t}) \leq \min\{d(u_{i_{t-1}},u_{i_{t}}),d(u_{i_t},u_{i_{t+1}})\}-1 \leq (d+1)/2-1 = (d-1)/2$. Let $w_a = (u_{i_a},v_{j_a})$ and $w_b = (u_{i_b},v_{j_b}),\, 0 \leq a < b \leq mn-1$ be two arbitrary vertices such that $u_{i_a}$ and $u_{i_b}$ are in the same branch of $T$. If $b-a \geq 4$ then $P_{a,b} \geq 3d/2-(d-1)-1/2 = (d+1)/2 \geq \phi(u_{i_a},u_{i_b})$. Assume $b-a = 3$. If $L(u_{i_{a+1}})+L(u_{i_{a+2}}) \leq (d-1)/2$ then $P_{a,b} \geq d-(d-1)/2-1/2 = d/2 \geq \phi(u_{i_a},u_{i_b})$. If $L(u_{i_{a+1}})+L(u_{i_{a+2}}) > (d-1)/2$ then $L(u_{i_a})+L(u_{i_{a+1}}) \leq (d-1)/2$ and hence $L(u_{i_{a+1}}) \leq (d-1)/2-L(u_{i_a})$. Therefore, $P_{a,b} \geq d-((d-1)/2-L(u_{i_a}))-(d-1)/2-1/2 = L(u_{i_a})+1/2 \geq \phi(u_{i_a},u_{i_b})$. Assume $b-a = 2$ then either $L(u_{i_a})+L(u_{i_{a+1}}) \leq (d-1)/2$ or $L(u_{i_{a+1}})+L(u_{i_{a+2}}) \leq (d-1)/2$. Without loss of generality, assume that $L(u_{i_{a+1}})+L(u_{i_{a+2}}) \leq (d-1)/2$ then $L(u_{i_{a+1}}) \leq (d-1)/2-L(u_{i_{a+2}})$. Hence, $P_{a,b} = d/2-L(u_{i_{a+1}})-1/2 \geq d/2-((d-1)/2-L(i_{a+2}))-1/2 = L(u_{i_{a+2}}) \geq \phi(u_{i_a},u_{i_b})$.

\textsf{Subcase-2.2:} Suppose (a)-(c) and (e) holds. Note that $L(u_{i_0}) = L(u_{i_{mn-1}}) = 0 \leq (d-1)/2$. Since $L(u_{i_t}) \geq 1$ and $L(u_{i_t})+L(u_{i_{t+1}}) = d(u_{i_t},u_{i_{t+1}})-1 \leq (d+1)/2-1 = (d-1)/2$ for $0 \leq t \leq mn-1$, we obtain $L(u_{i_t}) \leq (d-1)/2$ for all $0 \leq t \leq mn-1$.

Let $w_a = (u_{i_a},v_{j_a})$ and $w_b = (u_{i_b},v_{j_b}),\; 0 \leq a < b \leq mn-1$ be two arbitrary vertices such that $u_{i_a}$ and $u_{i_b}$ are in the same branch of $T$. If $b-a \geq 3$ then $P_{a,b} \geq d-L(u_{i_{a+1}})-L(u_{i_{a+2}})-1/2 \geq d-(d-1)/2-1/2 = d/2 \geq \phi(u_{i_a},u_{i_b})$. Assume that $b-a = 2$. Then note that $L(u_{i_a})+L(u_{i_{a+1}}) \leq (d-1)/2$ and hence $L(u_{i_a}) \leq (d-1)/2-L(u_{i_{a+1}})$. Hence, $P_{a,b} \geq d/2-L(u_{i_t})-(1/2) \geq d/2-[(d-1)/2-L(u_{i_{a+1}})]-(1/2) = L(u_{i_{a+1}}) \geq \phi(u_{i_a},u_{i_b})$.

\textsf{Subcase-2.3:} Suppose (a)-(c) and (f) holds. Assume $d$ is even then $L(u_{i_t}) \leq (d-2)/2$ for all $0 \leq t \leq mn-1$ and hence $P_{a,b} \geq ((b-a-1)/2)d-(b-a-1)((d-2)/2)-1/2 = b-a-(3/2) \geq d-(3/2) \geq \phi(u_{i_a},u_{i_b})$. Assume $d$ is odd then note that only for one vertex $u_{i_q}$, $L(u_{i_q}) \leq (d-1)/2$ from $b-a$ consecutive vertices and for all other vertices $u_{i_t}$, $L(u_{i_t}) \leq (d-3)/2$. Hence, $P_{a,b} \geq ((b-a-1)/2)d-(b-a-2)((d-3)/2)-(d-1)/2-(1/2) = 3(b-a-2)/2 \geq 3(d-2)/2 \geq \phi(u_{i_a},u_{i_b})$.
\end{proof}

\section{Radio number for the Cartesian product of level-wise regular tree and complete graph}

In this section, using the results of Section \ref{main:sec}, we determine the radio number for the Cartesian product of a level-wise regular tree and a complete graph.

It is well known that the center of tree $T$ consists of one vertex $r$ or two adjacent vertices $r,r'$ depending on $\diam(T)$ is even or odd. We view a tree $T$ rooted at $r$ or at both $r,r'$ respectively. Hal\'{a}sz and Tuza \cite{Tuza} defined a level-wise regular tree to be a tree $T$ in which all vertices at distance $i$ from root $r$ or $\{r,r'\}$ have the same degree, say $d_i$ for $0 \leq i \leq h$, where $h$ (the largest distance from a vertex to the root) is the height of tree $T$. Note that a level-wise regular tree is determined by its center(s) and $(d_0,d_1,\ldots,d_{h-1})$. Denote the level-wise regular tree by $T^z = T^z_{d_0,d_1,\ldots,d_{h-1}}$, where $z$ denotes the number of roots of level-wise regular tree. A star $K_{1,q}$ is a tree consisting of $q$ leaves and another vertex joined to all leaves by edges. A double star $D_q$ is a graph obtained by joining the center vertices of two copies of star graph $K_{1,q}$ by an edge. A banana tree $B_{q,k}$ is a graph constructed by connecting a single leaf from $q$ distinct copies of a star $K_{1,k}$ with a single vertex that is distinct from all the vertices of star graphs. Note that a star $K_{1,q}$, double star $D_q$ and a banana tree $B_{q,k}$ are level-wise regular trees $T^1_q$, $T^2_{q+1}$ and $T^1_{q,2,k}$,  respectively.

\begin{Theorem} Let $h \geq 1$ and $d_i,n \geq 3$ for $0 \leq i \leq h-1$ be any integers. Denote $\ve = \ve(T^z)$. Then $\rn(T^z \Box K_n)$
\begin{equation}\label{rn:level}
=
\left\{
\begin{array}{ll}
\[\(\displaystyle\sum_{i=1}^{h-1}(2h-2i-1)\(\prod_{1 \leq j \leq i}(d_j-1)\)\)+2h-1\]nd_0+(2h+1)(n-1), & \mbox{if $z = 1$}, \\[0.6cm]
\[\(\displaystyle\sum_{i=0}^{h-1}(2h-2i-1)\(\prod_{0 \leq j \leq i}(d_j-1)\)\)+2h+1\]2n-2h-1, & \mbox{if $z = 2$}.
\end{array}
\right.
\end{equation}
\end{Theorem}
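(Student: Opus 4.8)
The plan is to get the value from Theorem~\ref{thm:lower} for the lower bound and from Theorem~\ref{thm:ns2} for matching it by an explicit ordering. I would not try to apply the sufficient conditions of Theorem~\ref{thm:suf} directly: since $d_i\ge 3$ forces $T^z$ to have strictly more leaves than internal vertices, \emph{every} ordering of $V(T^z\Box K_n)$ contains two consecutive leaf-vertices, and their distance ($2h$ if $z=1$, $2h+1$ if $z=2$) exceeds the relevant thresholds in Theorem~\ref{thm:suf}(d)--(f) as soon as $h\ge 2$; so one must verify the weaker conditions (a)--(e) of Theorem~\ref{thm:ns2} instead.

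\emph{Parameters and the lower bound.} Because every internal vertex has degree $d_i\ge 3$, the centre of $T^z$ is its weight centre; rooting $T^z$ there, $L(u)$ is the distance of $u$ to the root(s), all leaves lie at level $h$, and a level-by-level count (the single root of $T^1$ has $d_0$ children, each root of $T^2$ has $d_0-1$ children, and a level-$i$ vertex has $d_i-1$ children) gives $\diam(T^z)=2h+z-1$, $\ve(T^z)=2-z$, together with closed forms for $m=|V(T^z)|$ and for $L(T^z)=\sum_{i\ge1}i\cdot(\#\,\mathrm{level}\text{-}i\text{ vertices})$ as sums of the products $\prod_j(d_j-1)$. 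Substituting $m$, $L(T^z)$, $d=\diam(T^z)$, $\ve=\ve(T^z)$ into $(mn-1)(d+\ve)-2nL(T^z)$ and collecting, for each $i$, the coefficient of $\prod_j(d_j-1)$ in $(d+\ve)(m-1)-2L(T^z)$ reproduces exactly the right-hand side of \eqref{rn:level}; the two cases $z=1,2$ differ only in the index range of the products and in the additive constant. This step is purely computational.

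\emph{The ordering.} To meet the bound I would exhibit an ordering $w_0,\dots,w_{mn-1}$, $w_t=(u_{i_t},v_{j_t})$, satisfying (a)--(e) of Theorem~\ref{thm:ns2}. Two interlocking cyclic patterns drive it: the second coordinates run cyclically through $V(K_n)$ so that $v_{j_t}\ne v_{j_{t+1}}$ for all $t$ (giving (b)); the first coordinates run round-robin through the branches of $T^z$ — through the $d_0$ root-branches when $z=1$, and through the $2(d_0-1)$ branches alternating the two sides when $z=2$ — so that $u_{i_t},u_{i_{t+1}}$ always lie in different (resp.\ opposite) branches (giving (c)); a root-vertex is placed first and a root-vertex last, so that $L(u_{i_0})=L(u_{i_{mn-1}})=0$ (giving (a)); and (d) holds automatically since $L(u)\le h$ for every $u$. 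The delicate choice is the order \emph{within} each branch together with the phase offset between the branch-cycle and the $K_n$-cycle: these should be set so that tree-vertices sharing a deep common ancestor recur only at large ordering-distance — concretely, so that the $n$ copies of any leaf, and the $n(d_{h-1}-1)$ leaf-vertices coming from a single leaf-bunch (the children of one level-$(h-1)$ vertex), are spaced at least $2h-1$ apart when $z=1$ and at least $2h$ apart when $z=2$. The hypotheses $d_i\ge 3$ and $n\ge 3$ supply exactly the slack that makes these two round-robins simultaneously realizable.

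\emph{Verifying (e), and the main obstacle.} Condition (e) of Theorem~\ref{thm:ns2} only constrains pairs $w_a,w_b$ whose first coordinates lie in one branch, where $\delta(u_{i_a},u_{i_b})=0$, so by \eqref{dist:TKn} it becomes $\phi(u_{i_a},u_{i_b})\le \frac{b-a-1}{2}(d+\ve)-\sum_{t=a+1}^{b-1}L(u_{i_t})-\frac{1-\ve}{2}$. Using $L\le h$, the right-hand side is smallest when the intermediate first coordinates are all leaves, in which case it equals $\frac12(b-a-1)$ for $z=1$ and $\frac12(b-a-2)$ for $z=2$; since $\phi(u_{i_a},u_{i_b})\le h$ (and $\le h-1$ for distinct tree-vertices), the inequality then reduces to a lower bound on the ordering-distance $b-a$ that is precisely what the spacing built in the previous step guarantees, the extreme instance being two copies of one leaf or two leaves of a common bunch; when some intermediate coordinate is not a leaf the bound is weaker and follows a fortiori, and the few pairs meeting the root-blocks at the two ends are checked by hand. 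I expect the real difficulty to be the construction itself: when $d_0=3$ the branch-cycle has period only $3$, so a vertex and its third successor always share a branch, and one must exploit the $K_n$-cycle and the finer (sub-branch, leaf-bunch) structure of the first coordinate to force $\phi\le 1$ for such pairs; moreover a leaf at position $t$ pushes $u_{i_{t-1}},u_{i_t},u_{i_{t+1}}$ into three distinct branches, which the round-robin must be arranged to respect, and getting the residues right when $\gcd(m,n)>1$ is the fiddly part of making the two interlocking cycles precise. The case $z=2$ is parallel, with $\delta\equiv1$ on consecutive pairs.
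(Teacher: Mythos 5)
Your overall route is the same as the paper's: compute $m=|V(T^z)|$, $L(T^z)$ and $d=\diam(T^z)$, substitute into Theorem~\ref{thm:lower} to get the right-hand side of \eqref{rn:level} as a lower bound, and then match it with an ordering verified against conditions (a)--(e) of Theorem~\ref{thm:ns2} rather than the sufficient conditions of Theorem~\ref{thm:suf}. Your reduction of condition (e) is also essentially the paper's: for same-branch pairs $\delta=0$, the right-hand side of \eqref{phiab} is minimized when all intermediate tree-coordinates are leaves, where it equals $\tfrac{1}{2}(b-a-1)$ for $z=1$ and $\tfrac{1}{2}(b-a-2)$ for $z=2$, so everything comes down to a lower bound on the ordering-gap $b-a$ for same-branch pairs.

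The genuine gap is that you never produce the ordering: you list the properties it must have (round-robin over branches, cyclic over $V(K_n)$, roots at the two ends, sufficient spacing of same-branch recurrences) and assert that $d_i,n\ge 3$ "supply exactly the slack" to realize them simultaneously, explicitly deferring what you yourself call the real difficulty. That construction is the bulk of the paper's proof: an explicit BFS-style indexing $u_0,\dots,u_{m-1}$ of $T^z$ (interleaving the two halves when $z=2$), followed by the diagonal rule $w_t=(u_i,v_j)$ with $t\equiv i \pmod m$ and $t\equiv j\pmod n$ organized in $\lcm(m,n)$-blocks with a shift when $m>n$ (and a separate closed formula when $m\le n$), from which one reads off the key quantitative facts --- for same-branch pairs with $i_a\ne i_b$ one gets $b-a\ge \phi(u_{i_a},u_{i_b})\,d_0$ (resp.\ $\phi\,(d_0-1)$), and for $i_a=i_b$ one gets $b-a\ge m\ge d+1$ --- which are exactly what closes condition (e). Without exhibiting such an ordering and these gap estimates, the upper bound is not established. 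There is also a small quantitative slip in your spacing requirement: for two copies of the \emph{same} leaf $\phi=L=h$, so for $z=1$ you need $b-a\ge 2h+1$ (indeed $b-a\ge m$ in the paper's ordering), not $2h-1$; the bound $2h-1$ only suffices for two distinct leaves of a common bunch, where $\phi\le h-1$.
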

\begin{proof} The order of $T^z$, $L(T^z)$ and diameter $d$ of $T^z$ are given by
\begin{equation*}
|V(T^z)| =
\begin{cases}
1+d_0+d_0\displaystyle\sum_{i=1}^{h-1}\(\displaystyle\prod_{1 \leq j \leq i}(d_j-1)\), & \mbox{if $z = 1$}, \\
2+2\displaystyle\sum_{i=0}^{h-1}\(\displaystyle\prod_{0 \leq j \leq i}(d_j-1)\), & \mbox{if $z = 2$},
\end{cases}
\end{equation*}

\begin{equation*}
L(T^z) =
\begin{cases}
d_0+d_0\displaystyle\sum_{i=1}^{h-1}(i+1)\(\displaystyle\prod_{1 \leq j \leq i}(d_j-1)\), & \mbox{if $z = 1$}, \\
2\displaystyle\sum_{i=0}^{h-1}(i+1)\(\displaystyle\prod_{0 \leq j \leq i}(d_j-1)\), & \mbox{if $z = 2$},
\end{cases}
\end{equation*}

\begin{equation*}
d =
\begin{cases}
2h, & \mbox{if $z = 1$}, \\
2h+1, & \mbox{if $z = 2$}.
\end{cases}
\end{equation*}
Substituting the above in \eqref{rn:lower}, we obtain the right-hand side of \eqref{rn:level} as a lower bound for $\rn(T^z \Box K_n)$. We now prove that this lower bound is tight by giving an ordering of $V(T^z \Box K_n)$ which satisfies Theorem \ref{thm:ns1}. For this purpose, denote $V(K_n) = \{v_0,v_1,\ldots,v_{n-1}\}$ and $V(T^z) = \{u_0,u_1,\ldots,u_{m-1}\}$ such that the ordering $u_0,u_1,\ldots,u_{n-1}$ is obtained as follows.

\textsf{Case-1:} $z=1$.

In this case, let $r$ be the unique center of $T^1$. Denote $d_0$ children of $r$ by $r_0,\ldots,r_{d_0-1}$. For $0 \leq i \leq d_0-1$, denote $d_1-1$ children of $r_i$ by $r_{i,0},r_{i,1},\ldots,r_{i,d_1-1}$. Inductively, denote the $d_{l}-1$ children of $r_{i_1,i_2,\ldots,i_l}\;(0 \leq i_1 \leq d_0-1, 0 \leq i_2 \leq d_1-1,\ldots,0 \leq i_l \leq d_{l-1}-1)$ by $r_{i_1,i_2,\ldots,i_l,i_{l+1}}$, where $0 \leq i_{l+1} \leq d_l-1$. Continue this process until all the vertices of $T^1$ got indexed. Now obtain  an ordering $u_0,u_1,\ldots,u_{m-1}$ of vertices of $T^1$ as follows: Set $u_0 := r$ and for $1 \leq t \leq m-1$, set $u_t := r_{i_1,i_2,\ldots,i_l}$, where $j=1+i_1+i_2d_0+\ldots+i_ld_0(d_1-1)\ldots(d_l-1)+\displaystyle\sum_{l+1 \leq t \leq h}d_0(d_1-1)\ldots(d_t-1)$.

\textsf{Case-2:} $z=2$.

In this case, let $r$ and $r'$ be two centers of $T^2$. Denote $d_0-1$ children of $r$ and $r'$ by $r_0,r_1,\ldots,r_{d_0-2}$ and $r'_{0},r'_{1},\ldots,r'_{d_0-2}$, respectively. For $0 \leq i \leq d_0-2$, denote $d_1-1$ children of $r_i$ and $r'_i$ by $r_{i,0},r_{i,1},\ldots,r_{i,d_1-1}$ and $r'_{i,0},r'_{i,1},\ldots,r'_{i,d_1-1}$, respectively. Inductively, denote the $d_l-1$ children of $r_{i_1,i_2,\ldots,i_l}$ and $r'_{i_1,i_2,\ldots,i_l}$, where $0 \leq i_1 \leq d_0-2, 0 \leq i_2 \leq d_1-1,\ldots,0 \leq i_l \leq d_{l-1}-1$ by $r_{i_1,i_2,\ldots,i_l,i_{l+1}}$ and $r'_{i_1,i_2,\ldots,i_l,i_{l+1}}$ respectively, where $0 \leq i_{l+1} \leq d_l-1$. Continue this process until all the vertices of $T^2$ got indexed. Rename $x_j := r_{i_1,i_2,\ldots,i_l} \mbox{ and } x'_j := r'_{i_1,i_2,\ldots,i_l}$,
where $j = 1+i_1+i_2(d_0-1)+\ldots+i_l(d_0-1)(d_1-1)\ldots(d_l-1)+\displaystyle\sum_{l+1 \leq t \leq h}(d_0-1)(d_1-1)\ldots(d_t-1)$. Now obtain an ordering $u_0,u_1,\ldots,u_{m-1}$ as follows: Set $u_0 := r, u_{m-1} := r'$ and for $1 \leq t \leq m-2$, set
\begin{equation*}
u_t :=
\begin{cases}
x_{t/2}, & \mbox{if $t \equiv 0$ (mod $2$)}, \\
x'_{(t+1)/2}, & \mbox{if $t \equiv 1$ (mod $2$)}.
\end{cases}
\end{equation*}

We now consider the following two cases to give an ordering $w_0,w_1,\ldots,w_{mn-1}$ of $V(T \Box K_n) = \{(u_i,v_j) : i = 0, 1, \ldots, m-1, j = 0, 1, \ldots, n-1\}$.

\textsf{Case-1:} $|W(T)|=1$.

We consider the following two subcases to define an ordering $w_0,w_1,\ldots,w_{mn-1}$ of $V(T^1 \Box K_n)$.

\textsf{Subcase 1:} $|V(T^1)| \leq |V(K_n)|$.

In this case, for $0 \leq t \leq mn-1$, define $w_t := (u_i,v_j)$, where $i \in \{0,1,\ldots,m-1\}\; j \in \{0,1,\ldots,n-1\}$ and
$$t := \left\{
\begin{array}{ll}
m(j-i)+i & \mbox{if $j \geq i$ and $j-i \neq n-1$}, \\[0.2cm]
m(n+j-i)+i & \mbox{if $j<i$ and $i-j \neq 1$}, \\[0.2cm]
m(n+j-i)+i-1 & \mbox{if $j < i$ and $i-j = 1$}, \\[0.2cm]
mn-1 & \mbox{if $j-i=n-1$}. \\[0.2cm]
\end{array}
\right.$$

\textsf{Subcase 2:} $|V(T^1)| > |V(K_n)|$.

In this case, for $x\lcm(m,n) \leq t \leq (x+1)\lcm (m,n)-1$, where $0 \leq x \leq \gcd(m,n)-1$, set
$$\alpha_t := \left\{
\begin{array}{ll}
(u_i,v_{j+x}), & \mbox{if $t \equiv i$ (mod $m$), $t \equiv j$ (mod $n$) and $0\leq j\leq n-(x+1)$}, \\[0.2cm]
(u_i,v_{j+x-n}), & \mbox{if $t \equiv i$ (mod $m$), $t \equiv j$ (mod $n$) and $n-x\leq j\leq n-1$}, \\[0.2cm]
\end{array}
\right.$$

Define an ordering $w_0,w_1,\ldots,w_{mn-1}$ of $V(T^1 \Box K_n)$ as follows: For $0 \leq t \leq mn-1$, set
$$w_t := \left\{
\begin{array}{ll}
\alpha_t, & \mbox{if $0 \leq t \leq mn-m-1$}, \\[0.2cm]
\alpha_{t+1}, & \mbox{if $mn-m \leq t \leq mn-2$}, \\[0.2cm]
\alpha_{mn-m}, & \mbox{if $t = mn-1$}.
\end{array}
\right.$$

Then, for above defined ordering, it is clear that (a) $L(u_{i_0}) = L(u_{i_{mn-1}}) = L(u_0) = 0$, (b) $v_{j_t}$ and $v_{j_{t+1}}$ are distinct for $0 \leq t \leq mn-2$, (c) $u_{i_t}$ and $u_{i_{t+1}}$ are in different branches for $0 \leq t \leq mn-2$ and (d) $L(u_{i_t}) \leq d/2$ for all $0 \leq t \leq mn-1$. Hence the conditions (a)-(d) of Theorem \ref{thm:ns2} are satisfies. We now show that the condition (e) of Theorem \ref{thm:ns2} holds as follows.

\textsf{Claim-1:} The above defined ordering $w_0,w_1,\ldots,w_{mn-1}$ of $V(T^1 \Box K_n)$ satisfies \eqref{phiab}.

Let $w_a=(u_{i_a},v_{j_a})$ and $w_b = (u_{i_b},v_{j_b})\;(0 \leq a < b \leq mn-1)$ be two vertices such that $u_{i_a}$ and $u_{i_b}$ are in the same branch when viewed as vertices of $T^1$. Denote the right-hand side of \eqref{phiab} by $P_{a,b}$. In this case, note that $d_0 \geq 3$, $\ve = 1$ and $L(u_{t}) \leq d/2$ for all $0 \leq t \leq mn-1$. Observe that $w_a = (u_{i_a},v_{j_a)}$ and $w_b = (u_{i_b},v_{j_b})$ such that $u_{i_a}$ and $u_{i_b}$ both are in the same branch of $T^2$ then there are two possibilities: (1) $i_a \neq i_b$ (2) $i_a = i_b$. In case of (1), we have $b-a \geq \alpha d_0$, where $\alpha \geq \phi(u_{i_a},u_{i_b})$. Hence, $
P_{a,b} = \((b-a-1)/2\)(d+1)-\sum_{t=a+1}^{b-1}L(u_{i_{t}}) \geq \((b-a-1)/2\)(d+1)-\sum_{t=a+1}^{b-1}(d/2) = \((b-a-1)/2\) = \alpha (d_0-1)/2 \geq  \phi(u_{i_a},u_{i_b})$. In case of (2), note that $b-a \geq m \geq d+1$. Hence, $
P_{a,b} = \((b-a-1)/2\)(d+1)-\sum_{t=a+1}^{b-1}L(u_{i_{t}}) \geq \((b-a-1)/2\)(d+1)-\sum_{t=a+1}^{b-1}(d/2) = \((b-a-1)/2\) = d/2 \geq  \phi(u_{i_a},u_{i_b})$.

\textsf{Case-2:} $|W(T)|=2$

Again we consider the following two subcases to define an ordering $w_0,w_1,\ldots,w_{mn-1}$ of $V(T^2 \Box K_n)$.

\textsf{Subcase 1:} $|V(T)| \leq |V(K_n)|$.

In this case, for $0 \leq t \leq mn-1$, define $w_t := (u_i,v_j)$, where $i \in \{0,1,\ldots,m-1\}, j \in \{0,1,\ldots,n-1\}$ and
$$t := \left\{
\begin{array}{ll}
m(j-i)+i, & \mbox{if $j\geq i$}, \\[0.2cm]
m(n+j-i)+i, & \mbox{if $j<i$}.
\end{array}
\right.$$

\textsf{Subcase 2:} $|V(T)| > |V(K_n)|$.

In this case, define an ordering $w_0,w_1,\ldots,w_{mn-1}$ of $V(T^2 \Box K_n)$ as follows: For $x\lcm(m,n) \leq t \leq (x+1)\lcm (m,n)-1$, where $0 \leq x \leq \gcd(m,n)-1$, set
$$w_t := \left\{
\begin{array}{ll}
(u_i,v_{j+x}), & \mbox{if $t \equiv i$ (mod $m$), $t \equiv j$ (mod $n$) and $0\leq j\leq n-(x+1)$}, \\[0.2cm]
(u_i,v_{j+x-n}), & \mbox{if $t \equiv i$ (mod $m$), $t \equiv j$ (mod $n$) and $n-x\leq j\leq n-1$}, \\[0.2cm]
\end{array}
\right.$$

Then, for above defined ordering, it is clear that (a) $L(u_{i_0}) = L(u_{i_{mn-1}}) = L(u_0) = 0$, (b) $v_{j_t}$ and $v_{j_{t+1}}$ are distinct for $0 \leq t \leq mn-2$, (c) $u_{i_t}$ and $u_{i_{t+1}}$ are in different branches for $0 \leq t \leq mn-2$ and (d) $L(u_{i_t}) \leq (d-1)/2$ for all $0 \leq t \leq mn-1$. Hence the conditions (a)-(d) of Theorem \ref{thm:ns2} are satisfied. Now we prove that the condition (e) of Theorem \ref{thm:ns2} holds as follows.

\textsf{Claim-2:} The above defined ordering $w_0,w_1,\ldots,w_{mn-1}$ of $V(T^2 \Box K_n)$ satisfies \eqref{phiab}.

Let $w_a = (u_{i_a},v_{j_a})$ and $w_b = (u_{i_b},v_{j_b})\;(0 \leq a < b \leq mn-1)$ be two vertices such that $u_{i_a}$ and $u_{i_b}$ are in the same branch when viewed as vertices of $T^2$. Denote the right-hand side of \eqref{phiab} by $P_{a,b}$. In this case, note that $d_0 \geq 3$, $\ve = 0$ and $L(u_t) \leq (d-1)/2$ for all $0 \leq t \leq mn-1$. Observe that  $w_a = (u_{i_a},v_{j_a})$ and $w_b = (u_{i_b},v_{j_b})$ such that $u_{i_a}$ and $u_{i_b}$ both are in the same branch of $T^2$ then there are two possibilities: (1) $i_a \neq i_b$, (2) $i_a = i_b$. In case of (1), we have $b-a \geq \alpha (d_0-1)$, where $\alpha \geq \phi(u_{i_a},u_{i_b})$. Hence, $P_{a,b} = \((b-a-1)/2\)d-\sum_{t=a+1}^{b-1}L(u_{i_t})-1/2 \geq \((b-a-1)/2\)d-\sum_{t=a+1}^{b-1}((d-1)/2)-1/2 = (b-a-2)/2 = \alpha(d_0-1)/2 \geq \phi(u_{i_a},u_{i_b})$. In case of (2), note that $b-a \geq m \geq d+1$. Hence, $P_{a,b} = \((b-a-1)/2\)d-\sum_{t=a+1}^{b-1}L(u_{i_t})-1/2 \geq \((b-a-1)/2\)d-\sum_{t=a+1}^{b-1}((d-1)/2)-1/2 = (b-a-2)/2 = \alpha(d-1)/2 \geq \phi(u_{i_a},u_{i_b})$.
\end{proof}

\begin{Corollary}\label{thm:k1mkn} Let $q \geq 3$ and $n \geq 4$ be any integers. Then
\begin{equation*}\label{rn:k1mkn}
\rn(K_{1,q} \Box K_n) = (q+3)n-3.
\end{equation*}
\end{Corollary}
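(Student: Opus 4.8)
The plan is to recognize $K_{1,q}$ as the level-wise regular tree $T^1_q$ (that is, $z=1$, $h=1$, $d_0=q$) and to read off the statement from the results already established, so that essentially no new argument is needed.

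First I would record the relevant invariants of $T:=K_{1,q}$. It has order $m=q+1$ and, since any two leaves are at distance $2$ through the apex while every leaf is at distance $1$ from the apex, $\diam(K_{1,q})=d=2$. A short computation of $w_T$ shows the apex is the unique weight center, so $\ve(T)=1$ and $|W(T)|=1$; moreover $L(T)=q$, because each of the $q$ leaves lies at level $1$ and the apex at level $0$. Plugging $m=q+1$, $d=2$, $\ve=1$ and $L(T)=q$ into \eqref{rn:lower} of Theorem~\ref{thm:lower} gives
\[
\rn(K_{1,q}\Box K_n)\;\ge\;\bigl((q+1)n-1\bigr)\cdot 3-2nq\;=\;(q+3)n-3,
\]
so it remains only to produce a radio labelling of $K_{1,q}\Box K_n$ of span $(q+3)n-3$.

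For the matching upper bound, the most economical route is to invoke the preceding theorem with $h=1$, $z=1$, $d_0=q$: the hypotheses $d_0\ge 3$ and $n\ge 3$ there are comfortably implied by $q\ge 3$ and $n\ge 4$, and when $h=1$ the sum $\sum_{i=1}^{h-1}$ in \eqref{rn:level} is empty, so the right-hand side collapses to $(2h-1)\,n\,d_0+(2h+1)(n-1)=nq+3(n-1)=(q+3)n-3$, as wanted. If a self-contained derivation is preferred, I would instead verify the hypotheses of Theorem~\ref{thm:suf} directly, which is light work here: build an ordering $w_0,w_1,\dots,w_{mn-1}$ of $V(K_{1,q}\Box K_n)$ with $w_0$ and $w_{mn-1}$ both of the form $(r,v_j)$ (so (a) holds), with consecutive $K_n$-coordinates distinct (so (b) holds), and with no two consecutive $T$-coordinates equal to one and the same leaf — this gives (c), since the apex $r$ lies in a different branch from every vertex, so only a repeated leaf could violate the branch condition. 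Such an ordering exists for all $q\ge 3$, $n\ge 4$; the round-robin interleaving of the $(q+1)n$ pairs used in the proof of the preceding theorem provides one explicitly. Finally, since $\diam(K_{1,q})=2=(d+1+\ve)/2$, every consecutive pair automatically satisfies $d(u_{i_t},u_{i_{t+1}})\le (d+1+\ve)/2$, i.e.\ condition (e) of Theorem~\ref{thm:suf}; hence Theorem~\ref{thm:suf} applies and yields $\rn(K_{1,q}\Box K_n)=(q+3)n-3$.

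I do not expect a hard step: the lower bound is a one-line substitution and the upper bound is a specialization of the section's main theorem (or a three-condition check in Theorem~\ref{thm:suf} that is trivial because $d=2$). The only point deserving attention is confirming that an ordering meeting (a)--(c) of Theorem~\ref{thm:suf} — in particular one that both starts and ends at the apex — really exists throughout the range $q\ge 3$, $n\ge 4$; this is precisely why it is cleanest to quote the explicit interleaving built in the proof of the preceding theorem rather than reconstruct it here.
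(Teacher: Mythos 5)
Your proposal is correct and matches the paper's intent exactly: the corollary is stated without proof precisely because it is the specialization $z=1$, $h=1$, $d_0=q$ of the level-wise regular tree theorem (the paper explicitly identifies $K_{1,q}$ with $T^1_q$), and your substitution into \eqref{rn:level} and the lower-bound check via \eqref{rn:lower} both evaluate correctly to $(q+3)n-3$. The alternative route through Theorem~\ref{thm:suf}(e), which is automatic since $\diam(K_{1,q})=2=(d+1+\ve)/2$, is a valid bonus but not needed.
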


\begin{Corollary}\label{thm:DabKn} Let $q \geq 2$ and $n \geq 4$ be any integers. Then
\begin{equation*}\label{rn:DabKn}
\rn(D_{q} \Box K_n) = 2n(q+3)-3.
\end{equation*}
\end{Corollary}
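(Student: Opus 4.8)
The plan is to obtain the Corollary as a direct specialization of the main theorem of this section (formula~\eqref{rn:level}). First I would record, as already observed in the paper, that the double star $D_q$ is the level-wise regular tree $T^2_{q+1}$: it has two adjacent centers, so $z=2$ and $\ve(D_q)=0$; its height is $h=1$; and $d_0=q+1$, since each center is joined to its $q$ leaves together with the other center. With $q\ge 2$ we get $d_0=q+1\ge 3$, and the Corollary's hypothesis $n\ge 4$ certainly gives $n\ge 3$, so all hypotheses of the level-wise regular tree theorem are met and it applies with $z=2$, $h=1$, $d_0=q+1$.

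The second step is the substitution itself. In the $z=2$ branch of~\eqref{rn:level}, the sum $\sum_{i=0}^{h-1}(2h-2i-1)\bigl(\prod_{0\le j\le i}(d_j-1)\bigr)$ collapses to its single $i=0$ term; there the coefficient $2h-2i-1$ equals $1$, so the term is $d_0-1=q$. Hence the bracketed factor is $q+2h+1=q+3$, the prefactor $2n$ is unchanged, and the trailing constant $2h+1$ equals $3$. This yields $\rn(D_q\Box K_n)=2n(q+3)-3$, exactly the claimed value.

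As a cross-check --- and an alternative, self-contained route should one prefer it --- I would instead feed the invariants $m=|V(D_q)|=2q+2$, $d=\diam(D_q)=3$, $\ve=0$ and $L(D_q)=2q$ (the $2q$ leaves are each at level $1$) directly into the lower bound~\eqref{rn:lower}, obtaining $(mn-1)(d+\ve)-2nL(T)=3\bigl((2q+2)n-1\bigr)-4qn=2qn+6n-3=2n(q+3)-3$; tightness is then certified by specializing the explicit $z=2$ ordering built in the proof of the level-wise regular tree theorem to $h=1$ and checking the conditions of Theorem~\ref{thm:ns2}.

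There is no substantive obstacle here: the whole content is already contained in the level-wise regular tree theorem, and the only care needed is routine bookkeeping --- confirming that the double sum-and-product expression degenerates correctly in the case $h=1$, $z=2$, and that the parameter range of the Corollary sits inside the range for which that theorem was established.
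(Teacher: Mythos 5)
Your proposal is correct and matches the paper's intent exactly: the Corollary is stated as an immediate specialization of the level-wise regular tree theorem, using the identification $D_q = T^2_{q+1}$ (so $z=2$, $h=1$, $d_0=q+1\ge 3$ for $q\ge 2$) already recorded in the paper, and your substitution into the $z=2$ branch of \eqref{rn:level} is carried out correctly. The cross-check via \eqref{rn:lower} with $m=2q+2$, $d=3$, $\ve=0$, $L(D_q)=2q$ is also accurate, though not needed.
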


\begin{Corollary}\label{thm:BmkKn} Let $k, n \geq 4$ and $q \geq 5$ be any integers. Then
\begin{equation*}\label{rn:BmkKn}
\rn(B_{q,k} \Box K_n) = qn(k+6)+7(n-1).
\end{equation*}
\end{Corollary}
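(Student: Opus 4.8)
The plan is to recognise $B_{q,k}$ as the level-wise regular tree $T^1_{q,2,k}$ of height $h=3$, read off the four data $m,d,\ve,L(T)$ needed to evaluate the lower bound of Theorem \ref{thm:lower}, and then achieve that bound by an explicit ordering of $V(B_{q,k}\Box K_n)$ verified directly against the necessary and sufficient conditions (a)--(e) of Theorem \ref{thm:ns2}. First I would root $B_{q,k}$ at the apex $r$ joining the $q$ stars. By the symmetry of $B_{q,k}$ about $r$, together with the fact that $\diam(B_{q,k})=6$ is even and so forces a single weight centre, $W(B_{q,k})=\{r\}$ and $\ve=1$; this is consistent with Lemma \ref{lem:wt1}, each of the $q$ components of $B_{q,k}-r$ being a single star on $k+1<|V(B_{q,k})|/2$ vertices. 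The vertices then fall into four levels: $r$ at level $0$, the $q$ connecting leaves at level $1$, the $q$ star centres at level $2$, and the $q(k-1)$ outer leaves at level $3$; hence $m=q(k+1)+1$, $d=6$, and $L(T)=q\cdot 1+q\cdot 2+q(k-1)\cdot 3=3qk$. Substituting these into \eqref{rn:lower} and simplifying gives the lower bound for $\rn(B_{q,k}\Box K_n)$ asserted in the corollary.

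The essential point is that this corollary is \emph{not} a specialisation of the level-wise regular theorem of this section: that result requires $d_i\ge 3$ for every $i$, whereas here $d_1=2$, so both its construction and its same-branch estimates are unavailable. I would therefore build an ordering by hand and verify Theorem \ref{thm:ns2} directly. Concretely, I would cycle through the $q$ bananas in a round-robin, at each step also advancing the $K_n$-colour by a diagonal (Chinese-remainder style) rule as in the Subcase-2 construction of the previous proof, and place two of the $n$ apex-copies at the two ends of the order. This makes conditions (a)--(d) of Theorem \ref{thm:ns2} transparent: the endpoints are at level $0$; consecutive colours are distinct; consecutive tree-coordinates lie in different bananas (and the apex, being the centre, is in a different branch from each neighbour), so consecutive tree-coordinates are always in different branches; and every level is at most $3<(d+1)/2$.

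The one substantive step is condition (e), the inequality \eqref{phiab}. For two product-vertices whose tree-coordinates lie in the same banana, the deepest common ancestor is at worst the star centre, so $\phi(u_{i_a},u_{i_b})\le 2$. Writing $s=b-a$ and using $L(u_{i_t})\le 3$ for the $s-1$ intermediate vertices, the right-hand side of \eqref{phiab} (with $d=6,\ve=1$) is at least $\tfrac{7(s-1)}{2}-3(s-1)=\tfrac{s-1}{2}$, which is $\ge 2\ge\phi$ exactly when $s\ge 5$. Thus it suffices to guarantee that any two same-banana product-vertices are at least five positions apart, and the round-robin over $q\ge 5$ bananas delivers precisely this separation; the extremal case $s=5$ with all intermediate levels equal to $3$ meets \eqref{phiab} with equality. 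This simultaneously explains why $q\ge 5$ is the correct hypothesis and why the weaker sufficient condition (f) of Theorem \ref{thm:suf}, which would demand $b-a\ge d=6$, cannot be invoked here.

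The main obstacle I anticipate is the simultaneous bookkeeping in the construction: because the branches are thin ($d_1=2$) and there are only $q$ of them, keeping consecutive entries in different bananas and with distinct colours while still spreading every banana's vertices at least five apart leaves little slack, and the precise interleaving must be pinned down, including the placement of the $n$ apex-copies and the handling of $\gcd(m,n)$ in the diagonal rule. The hypotheses $q\ge 5$ and $k,n\ge 4$ are exactly what provide enough bananas, leaves and colours to realise such an ordering and to make the boundary case of \eqref{phiab} hold, after which Theorem \ref{thm:ns2} upgrades the lower bound of the first paragraph to the claimed equality.
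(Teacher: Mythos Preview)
Your observation is sharp: the paper states Corollary~\ref{thm:BmkKn} immediately after the level-wise regular theorem as a direct specialisation, giving no separate argument, yet that theorem as stated requires every $d_i\ge 3$ while $B_{q,k}=T^1_{q,2,k}$ has $d_1=2$. So in the paper the corollary is, formally, left unjustified; you are right to supply a direct proof via Theorem~\ref{thm:ns2}. In substance, though, your route and the paper's coincide: the proof of the level-wise theorem itself proceeds by exhibiting an ordering and checking conditions~(a)--(e) of Theorem~\ref{thm:ns2}, and the only place the degree hypothesis enters is the same-branch estimate in Claim~1, which for $z=1$ uses a separation bound depending on $d_0$. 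Your direct computation --- same-banana pairs have $\phi\le 2$, the right side of \eqref{phiab} is at least $\tfrac{s-1}{2}$, so one needs $s\ge 5$ and hence $q\ge 5$ --- is exactly the correct replacement for that step in the thin-branch case $d_1=2$, and it neatly explains why the corollary carries the stronger hypothesis $q\ge 5$ rather than merely $d_0\ge 3$.

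One small gap to close: your bound $\phi(u_{i_a},u_{i_b})\le 2$ tacitly assumes $u_{i_a}\ne u_{i_b}$. When two product-vertices share the same level-$3$ tree-coordinate one has $\phi=3$, and the inequality $\tfrac{s-1}{2}\ge\phi$ then requires $s\ge 7$, not $5$. In any Chinese-remainder style construction the tree-index has period $m=q(k+1)+1\ge 26$, so this case is harmless; but it should be recorded alongside the $\phi\le 2$ analysis, just as the paper's Claim~1 separates the cases $i_a\ne i_b$ and $i_a=i_b$.
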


\begin{Example} In Table \ref{tab:K16K7}, a vertex ordering $O(V(K_{1,6} \Box K_7)):=w_0,w_1,\ldots,w_{48}$ and an optimal radio labelling of $K_{1,6} \Box K_7$ are shown.
\end{Example}
\begin{table}[ht!]
\centering
\caption{$\rn(K_{1,6} \Box K_7) = 60$.}\label{tab:K16K7}
\begin{tabular}{|c|l|l|l|l|l|l|l|l|l|l|l|}
\hline
$(u_i,v_j)$ & \multicolumn{1} {|c|}{$v_0$} & \multicolumn{1} {|c|}{$v_1$} & \multicolumn{1} {|c|}{$v_2$} & \multicolumn{1} {|c|}{$v_3$} & \multicolumn{1} {|c|}{$v_4$} & \multicolumn{1} {|c|}{$v_5$} & \multicolumn{1} {|c|}{$v_6$} \\ \hline
    $u_{0}$ & $w_{0} \ra 0$ & $w_{7} \ra 9$ &  $w_{14} \ra 18$ & $w_{21} \ra 27$ & $w_{28} \ra 36$ & $w_{35} \ra 45$ & $w_{48} \ra 60$  \\
    $u_1$ & $w_{42} \ra 53$ & $w_{1} \ra 2$ & $w_{8} \ra 11$ & $w_{15} \ra 20$ & $w_{22} \ra 29$ & $w_{29} \ra 38$ & $w_{36} \ra 47$  \\
    $u_2$ & $w_{37} \ra 48$ & $w_{43} \ra 54$ & $w_{2} \ra 3$ & $w_{9} \ra 12$ & $w_{16} \ra 21$ & $w_{23} \ra 30$ & $w_{30} \ra 39$  \\
    $u_3$ & $w_{31} \ra 40$ & $w_{38} \ra 49$ & $w_{44} \ra 55$ & $w_{3} \ra 4$ & $w_{10} \ra 13$ & $w_{17} \ra 22$ & $w_{24} \ra 31$  \\
    $u_4$ & $w_{25} \ra 32$ & $w_{32} \ra 41$ & $w_{39} \ra 50$ & $w_{45} \ra 56$ & $w_{4} \ra 5$ & $w_{11} \ra 14$ & $w_{18} \ra 23$  \\
    $u_5$ & $w_{19} \ra 24$ & $w_{26} \ra 33$ & $w_{33} \ra 42$ & $w_{40} \ra 51$ & $w_{46} \ra 57$ & $w_{5} \ra 6$ & $w_{12} \ra 15$  \\
    $u_6$ & $w_{13} \ra 16$ & $w_{20} \ra 25$ & $w_{27} \ra 34$ & $w_{34} \ra 43$ & $w_{41} \ra 12$ & $w_{47} \ra 58$ & $w_{6} \ra 7$ \\
    \hline
    \end{tabular}
\end{table}

\begin{Example} In Table \ref{tab:D5K8}, a vertex ordering $O(V(D_5 \Box K_7)):=w_0,w_1,\ldots,w_{83}$ and an optimal radio labelling of $D_5 \Box K_7$ are shown.
\begin{table}[ht!]
\centering
\caption{$\rn(D_5 \Box K_7) = $109.}\label{tab:D5K8}
\begin{tabular}{|c|l|l|l|l|l|l|l|l|l|l|l|l|}
\hline
$(u_i,v_j)$ & \multicolumn{1} {|c|}{$v_0$} & \multicolumn{1} {|c|}{$v_1$} & \multicolumn{1} {|c|}{$v_2$} & \multicolumn{1} {|c|}{$v_3$} & \multicolumn{1} {|c|}{$v_4$} & \multicolumn{1} {|c|}{$v_5$} & \multicolumn{1} {|c|}{$v_6$} \\ \hline
    $u_0$ & $w_{0} \ra 0$ & $w_{36} \ra 48$ & $w_{72} \ra 96$ & $w_{24} \ra 32$ & $w_{60} \ra 80$ & $w_{12} \ra 16$ & $w_{48} \ra 64$ \\
    $u_1$ & $w_{49} \ra 66$ & $w_{1} \ra 2$ & $w_{37} \ra 50$ & $w_{73} \ra 98$ & $w_{25} \ra 34$ & $w_{61} \ra 82$ & $w_{13} \ra 18$ \\
    $u_2$ & $w_{14} \ra 19$ & $w_{50} \ra 67$ & $w_{2} \ra 3$ & $w_{38} \ra 51$ & $w_{74} \ra 99$ & $w_{26} \ra 35$ & $w_{62} \ra 83$ \\
    $u_3$ & $w_{63} \ra 84$ & $w_{15} \ra 20$ & $w_{51} \ra 68$ & $w_{3} \ra 4$ & $w_{39} \ra 52$ & $w_{75} \ra 100$ & $w_{27} \ra 36$ \\
    $u_4$ & $w_{28} \ra 37$ & $w_{64} \ra 85$ & $w_{16} \ra 21$ & $w_{52} \ra 69$ & $w_{4} \ra 5$ & $w_{40} \ra 53$ & $w_{76} \ra 101$ \\
    $u_5$ & $w_{77} \ra 102$ & $w_{29} \ra 38$ & $w_{65} \ra 86$ & $w_{17} \ra 22$ & $w_{53} \ra 70$ & $w_{5} \ra 6$ & $w_{41} \ra 54$ \\
    $u_6$ & $w_{42} \ra 55$ & $w_{78} \ra 103$ & $w_{30} \ra 39$ & $w_{66} \ra 87$ & $w_{18} \ra 23$ & $w_{54} \ra 71$ & $w_{6} \ra 7$ \\
    $u_7$ & $w_{7} \ra 8$ & $w_{43} \ra 56$ & $w_{79} \ra 104$ & $w_{31} \ra 40$ & $w_{67} \ra 88$ & $w_{19} \ra 24$ & $w_{55} \ra 72$ \\
    $u_8$ & $w_{56} \ra 73$ & $w_{8} \ra 9$ & $w_{44} \ra 57$ & $w_{80} \ra 105$ & $w_{32} \ra 41$ & $w_{68} \ra 89$ & $w_{20} \ra 25$ \\
    $u_9$ & $w_{21} \ra 26$ & $w_{57} \ra 74$ & $w_{9} \ra 10$ & $w_{45} \ra 58$ & $w_{81} \ra 106$ & $w_{33} \ra 42$ & $w_{69} \ra 90$ \\
    $u_{10}$ & $w_{70} \ra 91$ & $w_{22} \ra 27$ & $w_{58} \ra 75$ & $w_{10} \ra 11$ & $w_{46} \ra 59$ & $w_{82} \ra 107$ & $w_{34} \ra 43$ \\
    $u_{11}$ & $w_{35} \ra 45$ & $w_{71} \ra 93$ & $w_{23} \ra 29$ & $w_{59} \ra 77$ & $w_{11} \ra 13$ & $w_{47} \ra 61$  & $w_{83} \ra 109$ \\
    \hline
    \end{tabular}
\end{table}
\end{Example}

\section*{Concluding Remarks}

In \cite{Kim}, Kim et al. determined the radio number of a path $P_m (m \geq 4)$ and the complete graph $K_n (n \geq 3)$ as follows:
\begin{equation}\label{rn:PmKn}
\rn(P_m \Box K_n) =
\begin{cases}
\frac{m^2n-2m+2}{2}, & \mbox{if $m$ is even}, \\
\frac{m^2n-2m+n+2}{2}, & \mbox{if $m$ is odd}.
\end{cases}
\end{equation}
This result can be proved using our results. The outline of the proof is as follows: Since a path $P_m$ is a tree, we use Theorem \ref{thm:lower} and \ref{thm:ns1} to prove the result. Note that the diameter $d$ of a path $P_m$ is $m-1$. We consider the following two cases.

\textsf{Case-1:} $m$ is even. In this case, the total level of a path $P_m$ is given by $L(P_m) = m(m-2)/2$. Substituting this in \eqref{rn:lower}, we obtain $\rn(P_m \Box K_n) \geq (m^2n-2m+2)/2$. Now observe that the ordering of $V(P_m \Box K_n)$ given in \cite{Kim} satisfies the conditions (a)-(c) of Theorem \ref{thm:ns1} and hence we obtain that the first line in \eqref{rn:PmKn} holds

\textsf{Case-2:} $m$ is odd. In this case, the total level of a path $P_m$ is given by $L(P_m) = (m^2-1)/2$. Substituting this in \eqref{rn:lower}, we obtain $\rn(P_m \Box K_n) \geq (m^2n-2m+n)/2$. Now if possible then assume that $\rn(P_m \Box K_n) = (m^2n-2m+n)/2$ then by Theorem \ref{thm:ns1}, there exist an ordering $w_0,w_1,\ldots,w_{mn-1}$ of $V(P_m \Box K_n)$ such that the conditions (a)-(c) of Theorem \ref{thm:ns1} hold. Since $L(u_{i_0}) = L(u_{i_{mn-1}})$ by (a), observe that there exist a vertex $w_k = (u_{i_k},v_{j_k})$ such that $L(u_{i_k}) = (m-1)/2$ and $L(u_{i_{k-1}}) \neq 0, L(u_{i_{k+1}}) \neq 0$. Note that $d(u_{i_{k-1}},u_{i_{k+1}}) = L(u_{i_{k-1}})+L(u_{i_{k+1}})-2\phi(u_{i_{k-1}},u_{i_{k+1}})$ with $\phi(u_{i_{k-1}},u_{i_{k+1}}) \geq 1$. Denote the right-hand side of \eqref{eqn:dab} by $S_{a,b}$ and consider $u_{i_{k-1}}$ and $u_{i_{k+1}}$ in \eqref{eqn:dab} then we obtain, $S_{k-1,k+1} = L(u_{i_{k-1}})+2L(u_{i_k})+L(u_{i_{k+1}})-(d+1) = L(u_{i_k})+2((d+1)/2)+L(u_{i_{k+1}})-(d+1) = L(u_{i_{k-1}})+L(u_{i_{k+1}}) > L(u_{i_{k-1}})+L(u_{i_{k+1}})-2\phi(u_{i_{k-1}},u_{i_{k+1}}) = d(u_{i_{k-1}},u_{i_{k+1}})$ which is a contradiction. Hence, $\rn(P_m \Box K_n) \geq (m^2n-2m+n+2)/2$. Now observe that the ordering of $V(P_m \Box K_n)$ given in \cite{Kim} satisfies the conditions (a)-(c) of Theorem \ref{thm:ns1} and hence we obtain that the second  line in \eqref{rn:PmKn} holds which completes the proof.


\begin{thebibliography}{00}
\bibitem{Bantva3}
D. Bantva, Further results on the radio of trees, \emph{Electron. Notes Discrete Math.} 63 (2017), 85--91.

\bibitem{Bantva1}
D. Bantva and D. Liu, Optimal radio labellings of block graphs and line graphs of trees, \emph{Theoretical Computer Science} 891 (2021), 90--104.

\bibitem{Bantva2}
D. Bantva, S. Vaidya and S. Zhou, Radio number of trees, \emph{Discrete Appl. Math.} 217 (2017), 110--122.

\bibitem{Benson}
K. F. Benson, M. Porter, M. Kanwal, The radio numbers of all graphs of order $n$ and diameter $n-2$, \emph{Le Matematiche}, 68 (2013), 167--190.

\bibitem{Calamoneri}
T. Calamoneri, The $L(h,k)$-labelling problem: An updated survey and annotated bibliography, \emph{The Computer Journal}, 54(8) (2011), 1344--1371.

\bibitem{Chartrand1}
G. Chartrand, D. Erwin, F. Harary and P. Zhang, Radio labellings of graphs, \emph{Bull. Inst. Combin. Appl.}, 33 (2001), 77--85.

\bibitem{Chartrand2}
G. Chartrand, D. Erwin and P. Zhang, A graph labelling suggested by FM channel restrictions, \emph{Bull. Inst. Combin. Appl.}, 43 (2005) 43--57.

\bibitem{Griggs}
J. R. Griggs and R. K. Yeh, Labelling graphs with  condition at distance 2, \emph{SIAM J. Discrete Math.}, 5(4) (1992), 586--595.

\bibitem{Tuza}
V. Hal\'{a}sz and Z. Tuza, Distance-constrained labelling of complete trees, \emph{Discrete Math.}, 338 (2015) 1398--1406.

\bibitem{Hale}
W. K. Hale, Frequency assignment: Theory and applications, \emph{Proc. IEEE}, 68(12) (1980), 1497--1514.

\bibitem{Kim}
B. M. Kim, W. Hwang, and B. C. Song, Radio number for the product of a path and a complete graph, \emph{J. Comb. Optim.}, 30 (2015), 139–-149.

\bibitem{Li}
X. Li, V. Mak and S. Zhou, Optimal radio labellings of complete $m$-ary trees, \emph{Discrete Applied Math.}, 158 (2010) 507--515.

\bibitem{Daphne1}
D. Liu, Radio number for trees, \emph{Discrete Math.}, 308 (2008), 1153--1164.

\bibitem{Saha}
D. Liu, L. Saha, S. Das, Improved lower bounds of the radio number of trees, \emph{Theoretical Computer Science}, 851 (2021), 1-13.

\bibitem{Daphne2}
D. Liu and M. Xie, Radio number of square cycles, \emph{Congr. Numer.}, 169 (2004), 105--125.

\bibitem{Daphne3}
D. Liu and M. Xie, Radio number of square paths, \emph{Ars Combin.}, 90 (2009), 307--319.

\bibitem{Liu}
D. Liu and X. Zhu, Multilevel distance labellings for paths and cycles, \emph{SIAM J. Discrete Math.}, 19 (2005), 610--621.

\bibitem{West}
D. B. West, Introduction to graph theory, Prentice Hall, 2001.

\bibitem{Yeh1}
R. K. Yeh, A survey on labelling graphs with a condition at distance two, \emph{Discrete Math.}, 306 (2006), 1217--1231.
\end{thebibliography}
\end{document}